\documentclass[11pt]{amsart} 
\usepackage{amsthm,amsbsy,amsfonts,amssymb,amscd}
\usepackage{bbm}
\usepackage[enableskew]{youngtab}
\usepackage[mathscr]{euscript} 

\usepackage{enumerate} 
\usepackage{tikz} 
\usetikzlibrary{calc,intersections,through, backgrounds,arrows,decorations.pathmorphing,backgrounds,positioning,fit,petri} 
\usetikzlibrary{calc}

\usepackage{tikz-cd} 
\usepackage[all]{xy}
\xyoption{knot}
\xyoption{poly}

\usepackage{hyperref}

\DeclareMathAlphabet{\mathpzc}{OT1}{pzc}{m}{it}
\newcommand{\lf}{\mathpzc{l}}

\usepackage[margin=1.5in]{geometry}

\setcounter{tocdepth}{1}

\numberwithin{equation}{section}

\newtheoremstyle{notes} {} {} {} {} {\bfseries} {.} {.5em} {}

\theoremstyle{plain}

\newtheorem{prop}[subsubsection]{Proposition}

\newtheorem{lemma}[subsubsection]{Lemma}

\newtheorem{thm}[subsubsection]{Theorem}

\newtheorem{thmA}{Theorem}

\newtheorem{defA}[thmA]{Definition}

\theoremstyle{remark}

\newtheorem{rem}[subsubsection]{Remark} 

\newtheorem{ddef}[subsubsection]{Definition} 

\swapnumbers

\usepackage{etoolbox}

\makeatletter
\pretocmd{\appendix}{\addtocontents{toc}{\protect\addvspace{10\p@}}}{}{}
\makeatother

\theoremstyle{remark}

\newtheorem{ex}[subsubsection]{Example}

\newtheoremstyle{construction} {} {} {} {} {\bfseries} { } {0pt} {}

\theoremstyle{construction}

\newcommand{\tto}{\twoheadrightarrow}
\newcommand{\mZ}{\mathbb{Z}}
\newcommand{\mT}{\mathbb{T}}
\newcommand{\soc}{\mathrm{soc}}

\newcommand{\im}{\mathrm{im}}
\newcommand{\cO}{\mathcal{O}}
\newcommand{\cF}{\mathcal{F}}
\newcommand{\rad}{\mathrm{rad}}
\newcommand{\scJ}{\mathscr{J}}
\newcommand{\scA}{\mathscr{A}}
\newcommand{\scR}{\mathscr{R}}

\newcommand{\op}{\mathrm{op}}
\newcommand{\II}{\mathbf{I}}
\newcommand{\lole}{\ell\hspace{-0.23mm}\ell}

%
\title[Ringel duality for ADR algebras]{Ringel duality and Auslander-Dlab-Ringel algebras}

\author{Kevin Coulembier}


\newcommand{\End}{{\rm End}}

\newcommand{\mk}{\Bbbk}
\newcommand{\Hom}{{\rm Hom}}
\newcommand{\Ext}{{\rm Ext}}
\keywords{quasi-hereditary algebras, Auslander-Dlab-Ringel algebras, Ringel duality, tilting modules, standardly stratified algebras}

\subjclass[2010]{13E10, 16G10, 16E65}

\begin{document} 
\date{} 
\begin{abstract}We introduce a new class of quasi-hereditary algebras, containing in particular the Auslander-Dlab-Ringel (ADR) algebras. We show that this new class of algebras is preserved under Ringel duality, which determines in particular explicitly the Ringel dual of any ADR algebra. As a special case of our theory, it follows that, under very restrictive conditions, an ADR algebra is Ringel dual to another one. The latter provides an alternative proof for a recent result of Conde and Erdmann, and places it in a more general setting.
	\end{abstract}

\maketitle 



\section*{Introduction}

In \cite{CPS}, Cline, Parshall and Scott introduced {\em quasi-hereditary algebras} in order to study in a unified framework the homological properties of the Schur algebra of the symmetric group and the algebras describing BGG category~$\cO$. Shortly after, Dlab and Ringel demonstrated in~\cite{DRAus} that certain algebras introduced by Auslander in~\cite{Auslander} are also quasi-hereditary. These algebras, known as {\em Auslander-Dlab-Ringel} (ADR) algebras, provided the largest known class of examples of quasi-hereditary algebras, since they associate a quasi-hereditary algebra~$\scA(R)$ to any (finite dimensional unital) algebra~$R$. Furthermore, the ADR construction shows that, in some sense, quasi-hereditary algebras determine all finite dimensional algebras.

In \cite{Ringel}, the {\em Ringel dual} of a quasi-hereditary algebra was introduced, indicating that quasi-hereditary algebras naturally form pairs.
Donkin proved in~\cite{DonkinTilting} that the Schur algebra is Ringel self-dual. In \cite{Soergel}, Soergel demonstrated that also the algebras describing category~$\cO$ are Ringel self-dual and, more generally, that the class of algebras describing parabolic category~$\cO$ is closed under Ringel duality, see \cite{dualities, prinjective} for more details. Until recently, almost nothing was known about the Ringel duals of ADR algebras. 

In \cite{Conde}, Conde and Erdmann showed that~$\scA(R)$ is Ringel dual to $\scA(R^{\op})^{\op}$ if all projective and injective $R$-modules have the same Loewy length and are all rigid. 
We can summarise these conditions as requesting that the left and right regular $R$-module, ${}_RR$ and $R_R$, are rigid (which clearly implies that all indecomposable summands have identical Loewy length). Furthermore, they show that Ringel duality between $\scA(R)$ and $\scA(R^{\op})^{\op}$ generally fails without these conditions.

In the current paper we substantially generalise the ADR procedure to construct a much larger class of quasi-hereditary algebras. The input is an algebra~$R$ along with a collection of ideals~$\II$ in $R$ satisfying certain compatibility conditions, and the resulting output is a quasi-hereditary algebra~$\scA(R,\II)$. We prove that we recover the algebra~$\scA(R)$ for a special choice of~$\II$, which thus yields an alternative proof of the main theorem in~\cite{DRAus}. There exists a natural duality~$\II\mapsto\mathring{\II}$ between the sets of systems of ideals for $R$ and~$R^{\op}$. This duality `preserves' the ADR-choice if and only if ${}_RR$ and~$R_R$ are rigid. We prove that the Ringel dual of~$\scA(R,\II)$ is always given by~$\scA(R^{\op},\mathring{\II})^{\op}$. By the above, this recovers the main result of \cite{Conde} as a special case, and provides an alternative proof.

Our construction actually also naturally includes a much wider class of algebras which are {\em standardly stratified}, a relaxation of the notion of quasi-heredity introduced in \cite{CPSbook}. Even our results on Ringel duality extend to this much more general picture.

We summarise our main results in more technical detail. For the sake of simplicity, we return to the special case of quasi-hereditary algebras to do this.

\begin{defA}
For an algebra~$R$ and $d\in\mZ_{\ge 1}$, a semisimple $d$-system of ideals is a collection
$$\II \;=\;\{I_{ij}\,|\,\,1\le i, j\le d+1 \}$$
of two-sided ideals in~$R$, such that
\begin{enumerate}[(a)]
\item $I_{ij}I_{jk}\subset I_{ik}$,\hspace{2mm} for $1\le i,j,k\le d+1$;
\item $I_{ij}=R$,\hspace{2mm} for $1\le  j\le i\le d+1$;
\item $R/I_{i,i+1}$ is a semisimple algebra for $1\le i\le d$.
\end{enumerate}
\end{defA}
To this data, we can associate a natural algebra structure on the vector space 
$$\scA(R,\II):=\bigoplus_{1\le i,j\le d} I_{ij}/I_{i,d+1},\quad\mbox{from multiplication}\quad I_{ij}/I_{i,d+1}\otimes I_{jk}/I_{j,d+1}\to I_{ik}/I_{i,d+1}.$$
Furthermore, for such a system $\II$ in $R$, we have a semisimple $d$-system of ideals in~$R^{\op}$ given by
$$\mathring{\II} \;=\;\{\mathring{I}_{ij}:=I_{d+2-j,d+2-i}\,|\,\,1\le i, j\le d+1 \}.$$

\begin{thmA}\label{Main1}
The algebra~$\scA(R,\II)$ is quasi-hereditary and its Ringel dual is~$\scA(R^{\op},\mathring{\II})^{\op}$.
\end{thmA}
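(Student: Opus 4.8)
My plan is to establish the two assertions — quasi-heredity and identification of the Ringel dual — by first building an explicit description of the standard, costandard, and tilting modules of $\scA := \scA(R,\II)$ in terms of the combinatorial data $\II$, and then recognising the endomorphism ring of a full tilting module as $\scA(R^{\op},\mathring{\II})^{\op}$. The first step is to fix the heredity chain: the idempotents of $\scA$ are naturally indexed by pairs $(i,\lambda)$ where $1\le i\le d$ and $\lambda$ runs over the simple summands of $R/I_{i,i+1}$ appearing in the relevant layer, and I would order these so that the colength of $I_{ij}$ (equivalently, the ``level'' $i$) is the dominant part of the order. Condition (a) gives the multiplicativity $I_{ij}I_{jk}\subset I_{ik}$ needed for the multiplication on $\scA$ to be associative and for the relevant subquotients to be ideals; condition (b) makes the algebra ``upper triangular'' in the level index (so $\scA_{ij}=0$ unless $i\le j$, after passing to $I_{ij}/I_{i,d+1}$ with the convention that $I_{ij}=R$ for $j\le i$ contributes the ``diagonal'' blocks $R/I_{i,d+1}$ in a controlled way); and condition (c) guarantees that the successive quotients in the heredity filtration of $\scA{}\scA$ are projective over the (semisimple) quotient algebras $R/I_{i,i+1}$, which is exactly what is needed for the chain of ideals to be a heredity chain. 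Concretely I expect $\Delta(i,\lambda)$ to be (a $\lambda$-isotypic piece of) $I_{i,d}/I_{i,d+1}$ viewed in the appropriate block, and dually $\nabla(i,\lambda)$ to involve $I_{d,\,?}$ read on the $R^{\op}$ side — this is where the Auslander-type analysis of \cite{DRAus,Auslander} gets replayed in the relative setting.

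Next, for the Ringel dual, the key is to produce a characteristic tilting module $T$ over $\scA$ and compute $\End_\scA(T)^{\op}$. I would argue that the indecomposable tilting modules $T(i,\lambda)$ are given by the images of the ideals $I_{ij}$ themselves — more precisely, $R/I_{i,d+1}$, suitably truncated, carries simultaneously a $\Delta$-filtration (from the chain $I_{i,d+1}\subset\cdots\subset I_{i,i}=R$ read top-down) and a $\nabla$-filtration (from the chain read on the other side via $I_{d+2-j,\,\cdot}$), because conditions (a)-(c) are visibly self-dual under $\II\mapsto\mathring\II$. Granting this, the full tilting module $T=\bigoplus T(i,\lambda)$ has endomorphism ring that one computes layer by layer: a homomorphism between two such tilting summands is, by the explicit model, given by right multiplication by an element of a suitable subquotient $\mathring I_{ij}/\mathring I_{i,d+1}$ of $R^{\op}$, and the composition law matches condition (a) for $\mathring\II$. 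Taking opposites to account for the left/right switch then yields $\End_\scA(T)^{\op}\cong\scA(R^{\op},\mathring\II)^{\op}$, which is the Ringel dual by definition (Ringel's theorem, \cite{Ringel}).

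The step I expect to be the main obstacle is verifying that the candidate tilting modules genuinely have both a $\Delta$- and a $\nabla$-filtration — i.e. that $\Ext^{>0}_\scA(\Delta,T)=0=\Ext^{>0}_\scA(T,\nabla)$ — and that $T$ is a \emph{full} tilting module (every indecomposable tilting summand occurs). The $\Delta$-filtration side should follow fairly directly from the heredity chain and the explicit ideal filtrations, but the $\nabla$-side requires the self-duality of the axioms to be leveraged carefully: one must check that the ``costandard'' filtration predicted by $\mathring\II$ is compatible with the module structure on the nose, and in particular that the semisimplicity condition (c) for $\II$ translates correctly into condition (c) for $\mathring\II$ (which it does, since $\mathring I_{i,i+1}=I_{d+1-i,d+2-i}$ and $R^{\op}/\mathring I_{i,i+1}\cong (R/I_{d+1-i,d+2-i})^{\op}$ is semisimple). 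A secondary technical point is bookkeeping the idempotent decomposition and the precise indexing of simples through all the quotients $I_{ij}/I_{i,d+1}$, so that the heredity order and the tilting computation use mutually consistent labels; I would set this up once, carefully, at the start and reuse it throughout. Finally, I would note that the whole argument only uses (a)-(c) and never quasi-heredity of $R$ itself, so the standardly stratified generalisation alluded to in the introduction is obtained by simply dropping, where appropriate, the finer consequences of (c) and keeping the weaker stratified analogues — but for the statement of Theorem~\ref{Main1} as phrased, the quasi-hereditary version suffices.
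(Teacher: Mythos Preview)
Your overall strategy matches the paper's: establish a heredity chain from the idempotents $e_1,\ldots,e_d$ (so that each layer $J_{k-1}/J_k$ is projective over $A/J_k$ with semisimple corner $e_k(A/J_k)e_k\cong R/I_{k,k+1}$), then exhibit an explicit bimodule as the characteristic tilting object and identify its endomorphism ring with $\scA(R^{\op},\mathring\II)$ via a double-centraliser computation. Two points of divergence are worth noting. First, your claim that $\scA$ is ``upper triangular'' with $\scA_{ij}=0$ for $i>j$ is wrong: for $i\ge j$ one has $I_{ij}=R$, so $X_{ij}=R/I_{i,d+1}$ is typically nonzero, and these blocks are essential (for instance $Ae_1=\bigoplus_i R/I_{i,d+1}$ is the largest projective). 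Second, the paper sidesteps costandard modules and explicit $\nabla$-filtrations altogether---precisely what you flag as the main obstacle. Instead it uses that each $\Delta_k$ has the two-term projective resolution $0\to Ae_{k+1}\to Ae_k\to\Delta_k\to 0$, so $\Ext^1_A(\Delta_k,M)$ is the cokernel of the map $e_kM\to e_{k+1}M$ induced by $1+I_{k+1,d+1}\in X_{k+1,k}$; for $M=T_j:=\bigoplus_k R/I_{k,d+2-j}$ this is just the canonical surjection $R/I_{k,d+2-j}\twoheadrightarrow R/I_{k+1,d+2-j}$, giving $\Ext^1=0$ immediately. Combined with the $\Delta$-filtration, which falls out of the identification $T_j\cong (A/J_{d+1-j})e_1$ and the fact that $\cF^\Delta$ is closed under $M\mapsto M/J_kM$, this certifies $\mT=\bigoplus_j T_j$ as tilting without ever writing down a $\nabla$. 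Your route via explicit $\nabla$-filtrations would also work, but is longer and requires tracking the costandard side of the picture, which the paper's argument never needs.
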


Let~$\scJ$ be the Jacobson radical of~$R$ and~$d$ its nilpotency index.  We define the `Jacobson system'~$\II^{\scJ}$ of~$R$ as 
$$I^{\scJ}_{ij}:=\{x\in R\,|\, x\scJ^{d+1-j} \subset \scJ^{d+1-i}\},\qquad\mbox{for $1\le i,j\le d+1$.}$$

\begin{thmA}\label{Main2} Consider an algebra~$R$.
\begin{enumerate}[(i)]
\item The Jacobson system is a semisimple system of ideals in~$R$.
\item The ADR algebra~$\scA(R)$ is isomorphic to $\scA(R,\II^{\scJ})$.
\item The system~$\mathring{\II}^{\scJ}$ is the Jacobson system of~$R^{\op}$ if and only if $R_R$ and~${}_RR$ are rigid.
\end{enumerate}
\end{thmA}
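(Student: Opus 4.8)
The three assertions are largely independent, and I would prove them in order; the substance is in (iii), while (i) and (ii) are essentially formal. For (i) I would simply verify axioms (a)--(c) of a semisimple $d$-system directly from the formula $I^{\scJ}_{ij}=\{x\in R\mid x\scJ^{d+1-j}\subseteq\scJ^{d+1-i}\}$, using only that each power $\scJ^{m}$ is a two-sided ideal and that $\scJ^{m}\scJ^{n}=\scJ^{m+n}$: each $I^{\scJ}_{ij}$ is then a two-sided ideal, axiom (a) is immediate, axiom (b) holds because $\scJ^{d+1-j}\subseteq\scJ^{d+1-i}$ when $j\le i$, and axiom (c) holds because $\scJ\subseteq I^{\scJ}_{i,i+1}$, so $R/I^{\scJ}_{i,i+1}$ is a quotient of the semisimple ring $R/\scJ$. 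For (ii) the key points are that $I^{\scJ}_{i,d+1}=\scJ^{d+1-i}$ and that, for $1\le i,j\le d$, the rule $x\mapsto(\bar 1\mapsto\overline{x})$ defines a $\mk$-linear isomorphism $I^{\scJ}_{ij}/I^{\scJ}_{i,d+1}\xrightarrow{\ \sim\ }\Hom_{R}\big((R/\scJ^{d+1-j})_R,(R/\scJ^{d+1-i})_R\big)$, under which the multiplication $(\bar x,\bar y)\mapsto\overline{xy}$ defining $\scA(R,\II^{\scJ})$ becomes composition of homomorphisms; summing over $1\le i,j\le d$ then identifies $\scA(R,\II^{\scJ})$ with $\End_{R}\big(\bigoplus_{k=1}^{d}R/\scJ^{k}\big)$, which is the ADR algebra $\scA(R)$, the only remaining work being to match this with whatever normalisation of $\scA(R)$ was fixed earlier.

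For (iii) I would first recast both systems in terms of powers of $\scJ$. The radical of $R^{\op}$ is again $\scJ$, with the same nilpotency index $d$, and $\scJ^{m}$ is the same subset of $R$ whether its $m$-th power is computed in $R$ or in $R^{\op}$; a short computation then gives $\mathring{I}^{\scJ}_{ij}=\{x\in R\mid x\scJ^{i-1}\subseteq\scJ^{j-1}\}$, whereas the $(i,j)$-entry of the Jacobson system of $R^{\op}$ is $\{x\in R\mid \scJ^{d+1-j}x\subseteq\scJ^{d+1-i}\}$. Writing $p=i-1$ and $q=j-1$, coincidence of the two systems amounts exactly to
\begin{equation*}
\{x\in R\mid x\scJ^{p}\subseteq\scJ^{q}\}\;=\;\{x\in R\mid \scJ^{d-q}x\subseteq\scJ^{d-p}\}\qquad\text{for all }0\le p,q\le d.\tag{$\star$}
\end{equation*}

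Granting $(\star)$, the equivalence with rigidity is short. For the ``only if'' direction, specialising $(\star)$ at $q=d$ gives, after simplifying the left-hand side by $\scJ^{d}=0$ and the right-hand side by $\scJ^{0}x=Rx$, the identity $\{x\mid x\scJ^{p}=0\}=\scJ^{d-p}$ for all $p$, i.e.\ $\soc^{p}(R_{R})=\rad^{d-p}(R_{R})$; since $R_{R}$ has Loewy length $d$ this is precisely the statement that $R_{R}$ is rigid, and symmetrically specialising at $p=0$ gives rigidity of ${}_{R}R$. For the ``if'' direction, assume $R_{R}$ and ${}_{R}R$ are both rigid; I claim each side of $(\star)$ equals $\scJ^{\max(q-p,0)}$. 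Indeed, if $x\scJ^{p}\subseteq\scJ^{q}$, then $x\scJ^{p+d-q}=x\scJ^{p}\scJ^{d-q}\subseteq\scJ^{q}\scJ^{d-q}=\scJ^{d}=0$, so $x\in\soc^{p+d-q}(R_{R})=\scJ^{\max(q-p,0)}$ by rigidity of $R_{R}$; the reverse inclusion $\scJ^{\max(q-p,0)}\subseteq\{x\mid x\scJ^{p}\subseteq\scJ^{q}\}$ is obvious, and the right-hand side of $(\star)$ is treated identically, multiplying the relation $\scJ^{d-q}x\subseteq\scJ^{d-p}$ on the left by $\scJ^{p}$ and invoking rigidity of ${}_{R}R$. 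I expect the one delicate step to be the translation leading to $(\star)$: two independent index reversals are at play --- the one built into $\II\mapsto\mathring{\II}$ and the one coming from passage to $R^{\op}$ --- and one must check that they combine so that the entire family of identities collapses to the single symmetric family $(\star)$; everything after that is the elementary computation just sketched.
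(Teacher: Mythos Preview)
Your proposal is correct and follows essentially the same route as the paper. For (i) and (ii) your arguments are exactly those of the paper (the paper packages the inclusion $\scJ\subset I^{\scJ}_{i,i+1}$ via the chain in Lemma~\ref{LemK}, and proves (ii) as Proposition~\ref{PropADR} with the same left-multiplication isomorphism). For (iii) your ``only if'' argument is identical to the paper's (specialising to the entries $(i,d+1)$ and $(1,j)$); for the ``if'' direction the paper observes that rigidity collapses the chain of Lemma~\ref{LemK} to equalities, giving $I^{\scJ}_{a,b}=\scJ^{b-a}$ directly, whereas you obtain the same identity by your multiplication trick $x\scJ^{p}\scJ^{d-q}\subseteq\scJ^{d}=0$ --- a slightly different packaging of the same elementary computation.
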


The combination of Theorems \ref{Main1} and \ref{Main2}(iii) implies that $\scA(R)$ is Ringel dual to $\scA(R^{\op})^{\op}$ when~$R_R$ and~${}_RR$ are rigid.

The remainder of the paper is organised as follows. In Section~\ref{SecPrel}, we recall some ring theoretic definitions. In Section~\ref{SecIdeals}, we investigate the new notion of systems of ideals in arbitrary rings. In Section~\ref{SecDefA}, we construct the ring~$\scA(R,\II)$ out of a system of ideals~$\II$ in a ring~$R$, and show that one specific choice of~$\II$ recovers the ADR construction. We also prove that $\scA(R,\II)$ always has an interesting stratification, recovering as a special case the main result in~\cite{DRAus}. In Section~\ref{SecTilt}, we introduce an $\scA(R,\II)$-$\scA(R^{\op},\mathring{\II})^{\op}$ bimodule~$\mT$, yielding a double centraliser property, and investigate some of its homological properties. In Section~\ref{SecAlg}, we restrict to {\em semisimple} systems of ideals and the case where $R$, and hence $\scA(R,\II)$, is actually an {\em algebra} over a field. All the above results then immediately imply that $\scA(R,\II)$ is quasi-hereditary, $\mT$ its tilting module and~$\scA(R^{\op},\mathring{\II})^{\op}$ the Ringel dual. Finally, in Section~\ref{SecEx}, we calculate the explicit form of all algebras~$\scA(R,\II)$, for~$\II$ ranging over all semisimple systems of ideals in one 2-dimensional hereditary algebra~$R$. This demonstrates that our construction is a substantial generalisation of the ADR construction.


\section{Preliminaries}\label{SecPrel}
We take the convention that `ring' means {\em unital} ring. We also fix an arbitrary field $\mk$ for the entire paper. By `algebra' we will always mean {\em finite dimensional, associative, unital} algebra over~$\mk$.

\subsection{Loewy filtrations} Fix a ring $R$.
\subsubsection{}A filtration of length $k$ of an~$R$-module~$M$
$$0= F_0M\subset F_1M\subset F_2M\subset\cdots\subset F_kM=M$$
is called semisimple if all subquotients~$F_iM/F_{i-1}M$ are semisimple $R$-modules. A {\bf Loewy filtration} of a module~$M$ is a finite semisimple filtration of minimal length. That minimal length, if it exists, is the {\bf Loewy length} of~$M$, $\lole(M)$. If there are no finite semisimple filtrations, we set~$\lole(M)=\infty$. A module of finite length is {\bf rigid} if it only has one Loewy filtration.

\subsubsection{} There are two extremal Loewy filtrations for an~$R$-module~$M$ of finite length.
The {\bf socle filtration},
$$0=\soc_0M\subset \soc_1M\subset \soc_2M\subset\cdots\subset \soc_{\lole(M)} M= M,$$
is the filtration where $\soc_kM$ is the submodule of~$M$ such that $\soc_kM/\soc_{k-1}M$ is the socle of~$M/\soc_{k-1}M$.
The {\bf radical filtration},
$$0=\rad^{\lole(M)}M\subset \cdots\subset \rad^2M\subset\rad^1M\subset \rad^0{M}=M,$$
is the filtration where $\rad^iM$ is the radical of~$\rad^{i-1}M$.

Clearly a module~$M$ of finite length is rigid if and only if $\rad^i(M)=\soc_{\lole(M)-i}(M)$, for all~$0\le i\le \lole(M)$.

\subsection{Semiprimary rings}
The Jacobson radical $\mathscr{J}:=\rad(R)$ of a ring~$R$, is the ideal of elements which annihilate all simple (left, or equivalently, right) $R$-modules, see~\cite[\S4]{Lam}.
A ring~$R$ is {\bf semiprimary} if the Jacobson radical $\mathscr{J}$ is nilpotent and~$R/\scJ$ is semisimple. Any finite dimensional algebra over $\mk$ is semiprimary, see \cite[Corollary~4.19]{Lam}.

In this subsection, we assume that $R$ is semiprimary. 
\subsubsection{}\label{socradJ} It follows easily that for any right $R$-module~$M$, we have
$$\rad M_R\;=\; M\scJ\qquad\mbox{and}\qquad\soc M_R\;=\; \{v\in M\,|\, v\scJ=0\}.$$
The corresponding observation for left modules shows that ${}_RR$ and $R_R$ have the same finite Loewy length $\lole(R)$, which we call the Loewy length of~$R$.

\subsubsection{}\label{DefRigid} We call~$R$ {\bf rigid} if the left and right regular module, ${}_RR$ and~$R_R$ are rigid. By the observations in \ref{socradJ}, we have the following lemma.
\begin{lemma}\label{LemRigid} A semiprimary ring $R$ of Loewy length $\ell=\lole(R)$ is rigid if and only if
$$\{x\in R\,|\, \scJ^{\ell-i}x= 0\}\;=\;\scJ^i \;=\;\{x\in R\,|\, x\scJ^{\ell-i}= 0\},\qquad\mbox{for all $\;1\le i\le \ell$.}$$
\end{lemma}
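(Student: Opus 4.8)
The plan is to unwind the definition of rigidity using the explicit description of the socle and radical filtrations of ${}_RR$ and $R_R$ provided by \ref{socradJ}, and then to recognise the stated equalities as the coincidence of these two filtrations. First I would record that, by \ref{socradJ} applied iteratively, $\rad^i(R_R)=\scJ^i$ (the $i$-th power of the Jacobson radical), while $\soc_k(R_R)=\{x\in R\,|\,x\scJ^k=0\}$; the latter follows by induction on $k$, since $\soc_{k}(R_R)/\soc_{k-1}(R_R)$ is the socle of $R/\soc_{k-1}(R_R)$, which by \ref{socradJ} consists of the classes killed by $\scJ$, i.e.\ of $x$ with $x\scJ\subseteq\soc_{k-1}(R_R)=\{y\,|\,y\scJ^{k-1}=0\}$, equivalently $x\scJ^k=0$. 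Symmetrically, for the left module, $\rad^i({}_RR)=\scJ^i$ and $\soc_k({}_RR)=\{x\in R\,|\,\scJ^kx=0\}$.

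Next I would invoke the criterion recalled just before \ref{socradJ}: a finite-length module $M$ is rigid if and only if $\rad^i(M)=\soc_{\lole(M)-i}(M)$ for all $0\le i\le\lole(M)$. Since $\lole({}_RR)=\lole(R_R)=\ell$, rigidity of $R_R$ is equivalent to
$$\scJ^i\;=\;\rad^i(R_R)\;=\;\soc_{\ell-i}(R_R)\;=\;\{x\in R\,|\,x\scJ^{\ell-i}=0\}\qquad\text{for all }0\le i\le\ell,$$
and likewise rigidity of ${}_RR$ is equivalent to $\scJ^i=\{x\in R\,|\,\scJ^{\ell-i}x=0\}$ for all $0\le i\le\ell$. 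Conjoining the two statements and noting that the cases $i=0$ and $i=\ell$ are automatic (for $i=0$ both sides are $R$, for $i=\ell$ both sides are $\scJ^\ell$, since $\scJ^\ell=0$ by semiprimarity and $\{x\,|\,x\scJ^0=0\}=\{x\,|\,x=0\}=0$ — one checks $\scJ^0=R$), the claimed range $1\le i\le\ell$ (or indeed $1\le i\le\ell-1$) carries all the content, which gives the lemma.

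The only genuinely substantive point is the identification $\soc_k(R_R)=\{x\,|\,x\scJ^k=0\}$ by induction, and the routine check that the reindexing $i\mapsto\ell-i$ converts the socle-filtration description into the annihilator condition as written; everything else is a direct citation of \ref{socradJ} and the rigidity criterion. I expect the induction step for the socle filtration to be the main (mild) obstacle, essentially because one must be careful that $\soc_k(M/\soc_{k-1}M)$ pulls back correctly and that $R/\soc_{k-1}(R_R)$ is again a semiprimary right $R$-module so that \ref{socradJ} applies to it.
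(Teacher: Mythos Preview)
Your argument is correct and is exactly the unwinding the paper has in mind: the paper gives no proof beyond the sentence ``By the observations in \ref{socradJ}, we have the following lemma,'' and your identification of $\rad^i(R_R)=\scJ^i$, $\soc_k(R_R)=\{x\mid x\scJ^k=0\}$ (and the left analogues) together with the rigidity criterion is precisely that observation spelled out. One small clarification: your worry that $R/\soc_{k-1}(R_R)$ needs to be ``semiprimary'' is misplaced, since \ref{socradJ} is stated for \emph{any} right $R$-module over the fixed semiprimary ring $R$; the induction step therefore goes through without any additional hypothesis.
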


\subsection{Quasi-hereditary algebras} In this subsection we recall some results from \cite{CPS, Ringel}.

Consider a (finite dimensional unital associative) algebra~$A$ over $\mk$. 
We assume the isoclasses of simple left $A$-modules are labelled by the (finite) set~$\Lambda=\Lambda_A$. When we consider a partial order $\le$ on~$\Lambda_A$, we will write $(A,\le)$. We denote by~$A$-mod the category of finite dimensional left $A$-modules. The simple modules are denoted by~$\{L(\lambda)\,|\,\lambda\in\Lambda\}$ and their respective projective covers in $A$-mod by~$P(\lambda)$.

\begin{ddef}
An algebra~$(A,\le)$ is {\bf quasi-hereditary} if there exist~$\{\Delta(\lambda)\,|\, \lambda\in \Lambda\}$ in $A$-mod such that, for all $\lambda,\mu\in\Lambda$:
\begin{enumerate}[(a)]
\item $[\Delta(\lambda):L(\lambda)]=1$;
\item $[\Delta(\lambda):L(\mu)]=0$ unless~$\mu\le\lambda$;
\item there is an epimorphism $P(\lambda)\tto \Delta(\lambda)$ where the kernel has a filtration with each quotient of the form $\Delta(\nu)$, with $\nu\ge \lambda$.
\end{enumerate}
\end{ddef}
By the above definition, being quasi-hereditary is a property of the Morita equivalence class of an algebra.

We denote by~$\cF^\Delta_A$ the full subcategory of~$A$-modules which have a filtration with each quotient of the form $\Delta(\nu)$, for some $\nu\in\Lambda$. The number of times~$\Delta(\nu)$ appears in the filtration of~$M\in \cF^\Delta_A$ is denoted by~$(M:\Delta(\nu))$. This is clearly independent of the chosen filtration. By \cite[Theorem~4]{Ringel}, a module~$M\oplus N$ is in $\cF^\Delta_A$ if and only if both $M$ and $N$ are in $\cF^\Delta_A$.

\subsubsection{Ringel duality}\label{SecRingel} We can reformulate some of the results in~\cite[Sections~4 and~5]{Ringel} as follows. For a quasi-hereditary algebra~$(A,\le)$ and any~$\lambda\in\Lambda$ there exists precisely one (up to isomorphism) indecomposable module~$T(\lambda)$ such that
\begin{enumerate}[(a)]
\item $\Ext^1_A(\Delta(\mu),T(\lambda))=0$, for all $\mu\in\Lambda$;
\item there exists a monomorphism $\Delta(\lambda)\hookrightarrow T(\lambda)$ with cokernel in $\cF_A^\Delta$.
\end{enumerate}
We refer to the modules~$T(\lambda)$ as {\bf tilting modules} and set~$T:=\bigoplus_{\lambda\in\Lambda}T(\lambda)$.

The simple modules of the algebra
$$\scR(A,\le):=\End_{A}(T)^{\op}$$
 are naturally labelled by~$\Lambda$, where the projective cover of the module corresponding to $\lambda$ is given by~$\Hom_A(T,T(\lambda))$. Denote by~$\le^{-1}$ the partial order on~$\Lambda$ defined by~$\mu\le^{-1}\lambda$ if and only if $\lambda\le\mu$. By \cite[Theorem~6]{Ringel}, the algebra~$(\scR(A,\le),\le^{-1})$ is quasi-hereditary and by \cite[Theorem~7]{Ringel}, $\scR(\scR(A,\le),\le^{-1})$ is Morita equivalent to $A$. The algebra~$(\scR(A,\le),\le^{-1})$ is known as the {\bf Ringel dual} of~$(A,\le)$. Ringel duality can clearly be interpreted as a duality between Morita equivalence classes of quasi-hereditary algebras. By `the' Ringel dual algebra we will refer to any algebra~$(B,\le)$ Morita equivalent to $\scR(A,\le)$.

\subsubsection{}\label{DefChain}
Assume that, for some $d\in \mZ_{\ge 1}$, we have mutually orthogonal idempotents $\{e_i\,|\, 1\le i\le d\}$ in $A$ which sum up to $1$.
Then we
have a chain of two-sided idempotent ideals in $A$
$$0=J_d\subset J_{d-1}\subset\cdots\subset J_1\subset J_0=A,$$
with $J_j$ generated by $e_d+d_{d-1}+\ldots+e_{j+1}$.
This defines a map $\lf:\Lambda\to\{1,2,\ldots, d\}$, where for each $\lambda\in\Lambda$, we have
$$J_{\lf(\lambda)}L(\lambda)=0,\quad\mbox{but}\quad J_{\lf(\lambda)-1}L(\lambda)\not=0.$$
This generates a partial order on~$\Lambda$, by setting~$\mu<\lambda$ if $\lf(\mu)<\lf(\lambda)$.
\begin{lemma}\label{LemCPS}
Assume that, for each $1\le j\le d$, the left $A/J_{j}$-module~$J_{j-1}/J_{j}$ is projective and $e_j A/J_{j}e_j$ is a semisimple algebra, then $(A,\le)$ is quasi-hereditary with standard modules
$$\Delta(\lambda)\;=\; P(\lambda)/J_{\lf(\lambda)}P(\lambda).$$ Furthermore, $\{\lambda\in\Lambda\,|\,\lf(\lambda)=i\}$ is in bijection with the isoclasses of simple $e_i A/J_{i}e_i$-modules.
\end{lemma}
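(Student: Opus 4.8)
The plan is to translate the data into idempotent language, recognise the chain $(J_j)$ as a \emph{heredity chain}, and then verify axioms (a)--(c) of quasi-heredity directly for the claimed $\Delta(\lambda)$, by induction on $d$, using the structure theory of heredity ideals from \cite{CPS}. First I would set $\varepsilon_j:=e_{j+1}+\cdots+e_d$, so $J_j=A\varepsilon_jA$; since $A\cdot X=0$ forces $X=0$ for a subset $X$ of an $A$-module (as $1\in A$), an $A$-module factors through $A/J_j$ precisely when $\varepsilon_j$ annihilates it. As the $e_i$ are orthogonal idempotents summing to $1$, every indecomposable projective $P(\lambda)$ is a summand of exactly one $Ae_i$, and for that $i$ one has $e_iL(\lambda)\neq0$ whereas $e_kL(\lambda)=0$ for $k\neq i$ (because $\Hom_A(Ae_k,L(\lambda))\cong e_kL(\lambda)$); hence $\lf(\lambda)$ is this index $i$, and $\mu<\lambda\iff\lf(\mu)<\lf(\lambda)$ is the order obtained by partitioning $\Lambda$ according to which $Ae_i$ contains $P(\lambda)$. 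Writing $l:=\lf(\lambda)$, the module $\Delta(\lambda)=P(\lambda)/J_lP(\lambda)$ is a projective $A/J_l$-module (a projective $A$-module modulo an ideal times it) with top $L(\lambda)$, because $J_lL(\lambda)=0$ gives $J_lP(\lambda)\subseteq\rad P(\lambda)$; thus $\Delta(\lambda)$ is the projective cover of $L(\lambda)$ in $A/J_l$-mod, and axiom~(b) follows at once, the composition factors of an $A/J_l$-module being among the $L(\mu)$ with $\lf(\mu)\le l$.

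Next I would observe that the hypotheses say exactly that $0=J_d\subset\cdots\subset J_0=A$ is a heredity chain: each $J_{j-1}/J_j=(A/J_j)\bar e_j(A/J_j)$ is an idempotent ideal of $A/J_j$ (as $J_{j-1}$ is idempotent), it is left projective by assumption, and the remaining heredity condition $(J_{j-1}/J_j)\rad(A/J_j)(J_{j-1}/J_j)=0$ reduces to $\bar e_j\rad(A/J_j)\bar e_j=\rad(\bar e_j(A/J_j)\bar e_j)=0$, i.e.\ to the semisimplicity of $e_jA/J_je_j$. I would then prove axioms (a) and (c) for the $\Delta(\lambda)$ above by induction on $d$, the case $d=1$ ($A$ semisimple, $\Delta(\lambda)=P(\lambda)=L(\lambda)$) being trivial. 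For $d>1$ set $H:=J_{d-1}=Ae_dA$, which by the $j=d$ case of the hypotheses and \cite{CPS} is a heredity ideal of $A$ with ${}_AH$ projective and $e_dAe_d$ semisimple. If $\lf(\lambda)=d$ then $J_d=0$, so $\Delta(\lambda)=P(\lambda)=Ae_\lambda$ for a primitive $e_\lambda\le e_d$; then $e_\lambda\rad(A)e_\lambda\subseteq\rad(e_dAe_d)=0$, so $e_\lambda Ae_\lambda$ is a division ring and $[\Delta(\lambda):L(\lambda)]=1$, giving (a), while (c) is vacuous.

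If $\lf(\lambda)<d$, the quotient $A/H$ carries the shorter chain $0=J_{d-1}/H\subset\cdots\subset J_0/H=A/H$ with idempotents $\bar e_1,\dots,\bar e_{d-1}$, still satisfying the hypotheses since $(A/H)/(J_k/H)\cong A/J_k$; by induction $A/H$ is quasi-hereditary with standard modules $\Delta^{A/H}(\lambda)=P^{A/H}(\lambda)/(J_{\lf(\lambda)}/H)P^{A/H}(\lambda)=P(\lambda)/J_{\lf(\lambda)}P(\lambda)=\Delta(\lambda)$, and, $H$ being a heredity ideal, the inflations of these to $A$ are the $A$-standard modules at all weights with $\lf<d$. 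Axiom~(c) over $A$ then follows from axiom~(c) over $A/H$: the surjection $P(\lambda)\tto\Delta(\lambda)$ factors through $P^{A/H}(\lambda)=P(\lambda)/HP(\lambda)$, so its kernel $J_{\lf(\lambda)}P(\lambda)$ is an extension of the kernel of $P^{A/H}(\lambda)\tto\Delta(\lambda)$ (which by the inductive (c) is $\Delta^{A/H}$-filtered, hence $\Delta$-filtered, by weights $\ge\lambda$) by $HP(\lambda)$; and since $P(\lambda)$ is projective, $HP(\lambda)\cong Ae_d\otimes_{e_dAe_d}e_dP(\lambda)$, where $e_dP(\lambda)$ is automatically a semisimple $e_dAe_d$-module, so $HP(\lambda)$ is a direct sum of the modules $Ae_\nu=\Delta(\nu)$ with $\lf(\nu)=d$, all strictly above $\lambda$. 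Concatenating the two filtrations yields axiom~(c) for $A$, completing the induction.

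For the supplementary statement I would invoke the classical bijection attached to an idempotent $\varepsilon$ of an algebra $B$: the functor $M\mapsto\varepsilon M$ identifies the isoclasses of simple $B$-modules $L$ with $\varepsilon L\neq0$ with the isoclasses of simple $\varepsilon B\varepsilon$-modules. Applying this with $B=A/J_i$ and $\varepsilon=\bar e_i$, the simple $A/J_i$-modules are the $L(\mu)$ with $\lf(\mu)\le i$, and for these $\bar e_iL(\mu)=e_iL(\mu)\neq0$ exactly when $\lf(\mu)=i$; hence $\{\lambda:\lf(\lambda)=i\}$ is in bijection with the isoclasses of simple $e_iA/J_ie_i$-modules. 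The step I expect to be the real work is axiom~(c) --- producing the $\Delta$-filtration of $J_{\lf(\lambda)}P(\lambda)$ by standards labelled by weights $\ge\lambda$ --- since this is exactly where the heredity-chain conditions (left projectivity of the layers $J_{j-1}/J_j$ and semisimplicity of the corners) genuinely enter, via the isomorphism $Ae_d\otimes_{e_dAe_d}e_dA\cong H$; the remainder, namely propagating $\lf$, the partial order and the formula for $\Delta(\lambda)$ through the induction, is routine bookkeeping.
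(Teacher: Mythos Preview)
Your argument is correct. The paper's own proof is much terser: for quasi-heredity it simply cites \cite[Lemma~3.4]{CPS}, and for the bijection it verifies by hand the standard fact you invoke, namely that $L\mapsto \bar e_i L$ sets up a bijection between simple $A/J_i$-modules not killed by $\bar e_i$ and simple $e_i(A/J_i)e_i$-modules. So your treatment of the bijection coincides with the paper's, while for quasi-heredity you have effectively reproved \cite[Lemma~3.4]{CPS} in this specific setting: recognising the chain $(J_j)$ as a heredity chain and then running the inductive argument on $d$, peeling off the top heredity ideal $H=Ae_dA$, is exactly the content of that lemma. The only step worth flagging is that the isomorphism $HP(\lambda)\cong Ae_d\otimes_{e_dAe_d}e_dP(\lambda)$ relies on the multiplication map $Ae_d\otimes_{e_dAe_d}e_dA\to H$ being an isomorphism; this does follow from the hypotheses (left projectivity of $H$ together with semisimplicity of $e_dAe_d$), and you allude to it in your final paragraph, but it would be cleaner to state and justify this explicitly rather than leaving it implicit. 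What your approach buys is a self-contained account that makes transparent why the formula $\Delta(\lambda)=P(\lambda)/J_{\lf(\lambda)}P(\lambda)$ holds and where each hypothesis enters; what the paper's approach buys is brevity, since the inductive mechanism is entirely standard.
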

\begin{proof}
That $(A,\le)$ is quasi-hereditary is \cite[Lemma~3.4]{CPS}.

Now take $\lambda\in\Lambda$ with $i=\lf(\lambda)$.
It then follows that $e_i L(\lambda)$ is non-zero and thus a simple $e_iAe_i$-module with trivial $e_iJ_ie_i$-action. If we have an isomorphism $e_iL(\lambda)\simeq e_iL(\nu)$ some $\nu$ with $\lf(\nu)=i$, it follows that $L\simeq L'$. Hence $\{\lambda\,|\,\lf(\lambda)=i\}$ embeds into the set of isoclasses of simple $e_i(A/J_i)e_i$-modules.
Furthermore, starting from a simple $e_i (A/J_i) e_i$-module~$M$, we can induce an $A$-module 
$$N:=(A/J_i)e_i\otimes_{e_i A/J_i e_i}M$$
which satisfies $J_iN=0$ and $e_iN\simeq M$. Therefore, $M$ must have a simple constituent $L$ such that the above procedure yields the simple $e_i A/J_i e_i$-module~$M$. In conclusion, the embedding is actually a bijection.
\end{proof}

\begin{lemma}\label{LemMM}
Keep the assumptions of Lemma~\ref{LemCPS}. For any~$M\in \cF_A^{\Delta}$ and~$1\le k< d$, both $A$-modules~$J_kM$ and~$M/J_kM$ are in $\cF^\Delta_A$.
\end{lemma}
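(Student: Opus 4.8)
The statement to prove is Lemma~\ref{LemMM}: under the assumptions of Lemma~\ref{LemCPS}, for $M\in\cF_A^\Delta$ and $1\le k<d$, both $J_kM$ and $M/J_kM$ lie in $\cF_A^\Delta$. My plan is to reduce to the case of standard modules by induction on the $\Delta$-filtration length of $M$, and then to analyze $J_k\Delta(\lambda)$ directly using the explicit description $\Delta(\lambda)=P(\lambda)/J_{\lf(\lambda)}P(\lambda)$ from Lemma~\ref{LemCPS}.

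First I would observe that the assignment $M\mapsto J_kM$ is not exact in general, so one cannot simply apply the snake lemma to a short exact sequence $0\to\Delta(\lambda)\to M\to M'\to 0$. Instead, the right framework is that $\cF_A^\Delta$ is closed under extensions (and, by Ringel's result quoted in the excerpt, closed under direct summands), so it suffices to show: (1) $J_k\Delta(\lambda)\in\cF_A^\Delta$ for every $\lambda$, and (2) the induced sequence on $J_k$ of a $\Delta$-filtration stays a filtration with subquotients of the form $J_k\Delta(\nu)$ — more precisely, I would show that for a short exact sequence $0\to\Delta(\lambda)\to M\to M'\to 0$ with $M'\in\cF_A^\Delta$, the sequence $0\to J_k\Delta(\lambda)\to J_kM\to J_kM'\to 0$ is exact. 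The latter exactness is the crux: right-exactness of $J_k(-)=J_k\otimes_A(-)$ gives surjectivity of $J_kM\to J_kM'$ and that the image of $J_k\Delta(\lambda)$ generates the kernel, but injectivity on the left, i.e.\ $J_k\Delta(\lambda)\cap$ (preimage) behaving correctly, needs an argument. Here I expect to use that $M\in\cF_A^\Delta$ implies $\mathrm{Tor}_1^A(A/J_k, M')=0$, which should follow because $A/J_k$-modules have finite projective dimension issues controlled by the stratification, or more directly because $J_{k-1}/J_k$ being projective over $A/J_k$ makes $J_k$ behave well against $\Delta$-filtered modules. Concretely, I would try to prove $\mathrm{Tor}_1^A(A/J_k,\Delta(\nu))=0$ for all $\nu$ by splitting into the cases $\lf(\nu)\le k$ (where $J_k\Delta(\nu)=0$ since $\Delta(\nu)=P(\nu)/J_{\lf(\nu)}P(\nu)$ is already killed by $J_{\lf(\nu)}\supseteq J_k$) and $\lf(\nu)>k$.

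For step (1), when $\lf(\lambda)\le k$ we have $J_k\Delta(\lambda)=0\in\cF_A^\Delta$ trivially (empty filtration), and $M/J_k M$ arguments are symmetric. When $\lf(\lambda)=i>k$, I would use $\Delta(\lambda)=P(\lambda)/J_iP(\lambda)$ together with the chain $J_k\supset J_{k+1}\supset\cdots\supset J_i$ and the fact that each $J_{j-1}P(\lambda)/J_jP(\lambda)$ is a projective $A/J_j$-module (being a quotient of the projective $A/J_j$-module $J_{j-1}/J_j\otimes_A P(\lambda)$... — here I should be careful, but the key input is precisely the hypothesis that $J_{j-1}/J_j$ is $A/J_j$-projective). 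Projective $A/J_j$-modules are themselves $\Delta$-filtered with subquotients $\Delta(\nu)$, $\lf(\nu)=j$, by the standard quasi-hereditary theory for the quotient algebra $A/J_j$ (whose standard modules are inflations of those of $A$). Stacking these for $j=k+1,\dots,i$ realizes $J_k\Delta(\lambda)=J_kP(\lambda)/J_iP(\lambda)$ as an iterated extension of such pieces, hence $J_k\Delta(\lambda)\in\cF_A^\Delta$. Then $\Delta(\lambda)/J_k\Delta(\lambda)$ is an extension of $J_k\Delta(\lambda)$ by $\Delta(\lambda)$ read backwards — actually it sits in $0\to J_k\Delta(\lambda)\to\Delta(\lambda)\to\Delta(\lambda)/J_k\Delta(\lambda)\to 0$, so once two of three terms are $\Delta$-filtered and $\cF_A^\Delta$ is extension-closed, so is the third; combined with closure under summands this is clean.

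The main obstacle is the exactness assertion $0\to J_k\Delta(\lambda)\to J_kM\to J_kM'\to 0$, equivalently the vanishing of the relevant $\mathrm{Tor}_1$; once that is in hand, an induction on $(M:\Delta)$ finishes both claims simultaneously, since $M/J_kM$ then fits in $0\to J_kM\to M\to M/J_kM\to 0$ with the first two terms $\Delta$-filtered. I would handle the $\mathrm{Tor}$-vanishing by the projectivity hypothesis on the layers $J_{j-1}/J_j$: tensoring the chain $0=J_d\subset\cdots\subset J_0=A$ with $M$ and using that each $J_{j-1}/J_j$ is $A/J_j$-projective (hence flat as a right... — more precisely one uses it as a \emph{left} module to get that $\mathrm{Tor}_{>0}^A(A/J_j, -)$ vanishes on modules killed by $J_j$), one inductively shows $\mathrm{Tor}_1^A(A/J_k,\Delta(\nu))=0$, and then extends to all of $\cF_A^\Delta$ by the long exact sequence and induction on filtration length.
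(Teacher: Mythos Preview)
Your approach is workable but genuinely different from the paper's, which is essentially a two-line argument: by \cite[Lemma~3.2(d)]{CPS} every $M\in\cF_A^\Delta$ sits in a short exact sequence $0\to M'\to M\to M''\to 0$ with $M',M''\in\cF_A^\Delta$, where only standards $\Delta(\nu)$ with $\lf(\nu)>k$ occur in $M'$ and only those with $\lf(\nu)\le k$ occur in $M''$; one then identifies $M'=J_kM$ by noting that $M'$ is generated by images of maps from $P(\lambda)$ with $\lf(\lambda)>k$, while $\Hom_A(P(\lambda),M'')=0$ for such $\lambda$. The paper thus outsources the filtration rearrangement to a known lemma and only has to recognise the resulting submodule; your route is more self-contained but longer.

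Two points in your plan need correction. First, the assertion that ``two of three terms in $\cF_A^\Delta$ implies the third'' is false: extension-closure gives only the middle term from the outer two, and there are easy examples where a quotient of a $\Delta$-filtered module by a $\Delta$-filtered submodule is not $\Delta$-filtered. You do not actually need this, because a direct check shows $J_k\Delta(\lambda)$ equals $\Delta(\lambda)$ when $\lf(\lambda)>k$ (the primitive idempotent for $\lambda$ lies under $f_k$, so $J_kP(\lambda)=P(\lambda)$) and equals $0$ otherwise; your layer argument collapses to this. Second, the $\mathrm{Tor}$-vanishing you need does hold, but not for the reason you sketch: the hypothesis is \emph{left}-projectivity of $J_{j-1}/J_j$, which says nothing about $A/J_k$ as a \emph{right} module. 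Instead, apply $(A/J_k)\otimes_A-$ to $0\to J_iP(\nu)\to P(\nu)\to\Delta(\nu)\to 0$ with $i=\lf(\nu)$ and use idempotency $J_kJ_i=J_{\max(k,i)}$ to see directly that $\mathrm{Tor}_1^A(A/J_k,\Delta(\nu))=0$ in both cases $i\le k$ and $i>k$. With these fixes your induction on filtration length goes through.
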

\begin{proof}
By~\cite[Lemma~3.2(d)]{CPS}, for the given data there exists a short exact sequence
$$0\to M'\to M\to M''\to 0,$$
such that $M',M''\in\cF_A^\Delta$ and furthermore, $(M':\Delta(\nu))=0$ unless~$\lf(\nu)> k$ and $(M'':\Delta(\nu))=0$ unless~$\lf(\nu)\le k$. It then follows by definition of quasi-hereditary algebras that
$$\Hom_A(P(\lambda),M'')=0,\qquad\mbox{if $\lf(\lambda)>k$} $$
and furthermore, that the $A$-module~$M'$ is generated by vectors in the image of morphisms~$P(\lambda)\to M'$, for $\lf(\lambda)>k$. Hence we find that $M'=J_kM$.
\end{proof}


\section{Systems of ideals in rings}\label{SecIdeals}
We fix an arbitrary ring~$R$.
\subsection{Definitions}

\begin{ddef}\label{DefMultSyst}
For $d\in\mZ_{\ge 1}$, a {\bf $d$-system of ideals} in~$R$ is a collection 
$$\II \;=\;\{I_{ij}\,|\,\,1\le i, j\le d+1 \}$$
of two-sided ideals in~$R$, such that for all $1\le i,j,k\le d+1$
\begin{enumerate}[(a)]
\item $I_{ij}I_{jk}\subset I_{ik}$;
\item $I_{ij}=R$,\hspace{4mm} if $i\ge j$.
\end{enumerate}
\end{ddef}


\begin{ex}\label{Ex1}
We consider $d=1$. The ideals in~$R$ are in bijection with the $1$-systems. For any ideal $I$, we have the $1$-system~$\II$ given by~$I_{11}=R=I_{21}=I_{22}$ and~$I_{12}=I$.
\end{ex}

\begin{ex}\label{Ex2}
We consider $d=2$. The $2$-systems are in bijection with triples $\{I,K,L\}$ of ideals in~$R$ satisfying
$$KL\subset I\qquad\mbox{and}\qquad K\supset I\subset L.$$
Any 2-system of ideals is determined by such a tripe $\{I,K,L\}$ as
$$I_{12}=K,\; I_{13}=I,\; I_{23}=L\qquad\mbox{and}\qquad I_{ij}=R\;\mbox{ otherwise.}$$
\end{ex}

\begin{ex}\label{exaK}
Let~$L$ be a nilpotent ideal in~$R$ with nilpotency index $d$. So we have $L^d=0$ and~$L^{d-1}\not=0$, and use the convention~$L^0=R$. Define 
$$I^{L}_{ij}:=\{x\in R\,|\, xL^{d+1-j} \subset L^{d+1-i}\},\qquad\mbox{for $1\le i,j\le d+1$.}$$
Then~$\II^L $ forms a $d$-system of ideals in~$R$.

\end{ex}

\begin{lemma}\label{LemK}
For the $d$-system of Example~\ref{exaK} and $1\le i\le d+1$, we have inclusions
$$L^{d+1-i}=I^L_{i,d+1}\subset I^L_{i-1,d}\subset\cdots\subset I^L_{1,d+2-i}=\{x\in R\,|\, xL^{i-1}=0\}.$$
\end{lemma}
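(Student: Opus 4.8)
The plan is to handle separately the two claimed equalities at the ends of the chain and the string of inclusions in between, all by directly unwinding the definition $I^L_{ij}=\{x\in R\mid xL^{d+1-j}\subset L^{d+1-i}\}$ together with the conventions $L^0=R$ and $L^d=0$.

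For the endpoints: since $L^0=R$, one has $I^L_{i,d+1}=\{x\in R\mid xR\subset L^{d+1-i}\}$, and because $L^{d+1-i}$ is a two-sided (in particular right) ideal, $xR\subset L^{d+1-i}$ is equivalent to $x=x\cdot 1\in L^{d+1-i}$; hence $I^L_{i,d+1}=L^{d+1-i}$. At the other end, $I^L_{1,d+2-i}=\{x\in R\mid xL^{i-1}\subset L^{d}\}$, which, since $L^d=0$, is exactly $\{x\in R\mid xL^{i-1}=0\}$.

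For the inclusions, I would observe that every link has the form $I^L_{a,b}\subset I^L_{a-1,b-1}$, where $(a,b)$ ranges over $(i,d+1),(i-1,d),\ldots,(2,d+3-i)$; since $i\le d+1$, all these pairs satisfy $2\le a\le d+1$ and $2\le b\le d+1$, so every exponent appearing below lies in $\{0,1,\ldots,d\}$. Now if $x$ satisfies $xL^{d+1-b}\subset L^{d+1-a}$, then right-multiplying this inclusion by $L$ gives
$$xL^{d+2-b}\;=\;xL^{d+1-b}L\;\subset\;L^{d+1-a}L\;=\;L^{d+2-a},$$
which says precisely that $x\in I^L_{a-1,b-1}$. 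Concatenating these inclusions produces the asserted chain.

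I do not expect a genuine obstacle here; the only point to watch is that $L$ is merely an ideal, not unital, so the identity $L^mL=L^{m+1}$ should be used only for $m\ge 0$ (with $L^0=R$), which the index ranges above guarantee, and one should note that the boundary cases $a=d+1$ or $b=d+1$ (where the relevant power of $L$ equals $R$) cause no difficulty.
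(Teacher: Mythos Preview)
Your proof is correct. The paper states Lemma~\ref{LemK} without proof, treating it as an immediate consequence of the definition in Example~\ref{exaK}; your argument supplies precisely the routine verification one would expect, unwinding the definition $I^L_{ij}=\{x\in R\mid xL^{d+1-j}\subset L^{d+1-i}\}$ at the endpoints and using right multiplication by~$L$ for the intermediate inclusions.
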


\subsubsection{}We are particularly interested in the case where the nilpotent ideal $L$ is the Jacobson radical. If $R$ is semiprimary with Jacobson radical $\scJ$, we call the system of ideals~$\II^{\scJ}$ the {\bf Jacobson system}. Note that $\II^{\scJ}$ is an~$\ell$-system, for $\ell:=\lole(R_R)=\lole({}_RR)$.

\begin{ddef}\label{DefSS}
A $d$-system of ideals in~$R$ is {\bf semisimple} if the ring
$R/I_{k,k+1}$ is semisimple for all~$1\le k\le d$.
\end{ddef}

\begin{lemma}
For a semiprimary ring~$R$, the Jacobson system~$\II^{\scJ}$ is semisimple.
\end{lemma}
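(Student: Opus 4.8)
The statement to prove is: for a semiprimary ring $R$, the Jacobson system $\II^{\scJ}$ is semisimple — that is, $R/I^{\scJ}_{k,k+1}$ is a semisimple ring for every $1\le k\le d$, where $d$ is the nilpotency index of $\scJ$.

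The plan is to compute $I^{\scJ}_{k,k+1}$ explicitly from the defining formula and identify it as the annihilator of a semisimple module. By definition,
$$I^{\scJ}_{k,k+1}\;=\;\{x\in R\,|\, x\scJ^{d-k}\subset\scJ^{d+1-k}\}.$$
Consider the right $R$-module $N:=\scJ^{d-k}/\scJ^{d+1-k}$; this is annihilated on the right by $\scJ$ (since $\scJ^{d-k}\scJ=\scJ^{d+1-k}$), hence by \ref{socradJ} it is a semisimple right $R$-module, equivalently a semisimple right $R/\scJ$-module. Now $x\in I^{\scJ}_{k,k+1}$ says exactly that left multiplication by $x$ sends $\scJ^{d-k}$ into $\scJ^{d+1-k}$, i.e. $x$ acts as zero on $N$ via the left multiplication action of $R$ on $\scJ^{d-k}/\scJ^{d+1-k}$. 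Thus $I^{\scJ}_{k,k+1}$ is the two-sided annihilator in $R$ of the $R$-module $N$ (one should be a little careful about left versus right: $N$ is naturally an $R$-bimodule, and $I^{\scJ}_{k,k+1}=\{x\,|\,xN=0\}$ is the annihilator of the left action).

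It remains to argue that $R/\operatorname{Ann}(N)$ is semisimple. Since $N$ is semisimple as a left $R$-module as well — the left/right symmetry of \ref{socradJ} applies equally, $N$ being annihilated on the left by $\scJ$ because $\scJ\scJ^{d-k}=\scJ^{d+1-k}$ — the annihilator of $N$ contains $\scJ$, so $R/\operatorname{Ann}(N)$ is a quotient of the semisimple ring $R/\scJ$ and is therefore semisimple. (A quotient ring of a semisimple ring is semisimple, being a quotient module that splits off as a direct summand.) This completes the argument; note in particular that $\scJ\subseteq I^{\scJ}_{k,k+1}$ for all $k$, with equality exactly when $k=d$ at one extreme.

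The only subtlety — and the one place care is needed rather than routine calculation — is keeping the two-sided bookkeeping straight: verifying that $I^{\scJ}_{k,k+1}$ really is a two-sided ideal containing $\scJ$ (which already follows from Definition~\ref{exaK}/\ref{DefMultSyst}(a) applied with $L=\scJ$, since $\scJ I^{\scJ}_{k,k+1}\subset I^{\scJ}_{k+1,k+1}=R$ is automatic, but the relevant inclusion $\scJ\subset I^{\scJ}_{k,k+1}$ comes from $\scJ\cdot\scJ^{d-k}=\scJ^{d+1-k}$), and that the containment $\scJ\subseteq I^{\scJ}_{k,k+1}$ is what forces the quotient to factor through $R/\scJ$. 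Everything else is a direct consequence of the characterisations of radical and socle in \ref{socradJ} together with semisimplicity of $R/\scJ$.
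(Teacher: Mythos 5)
Your proof is correct and rests on the same core observation as the paper's: $\scJ\subseteq I^{\scJ}_{k,k+1}$ (you deduce this directly from $\scJ\cdot\scJ^{d-k}=\scJ^{d+1-k}$, while the paper cites its Lemma~\ref{LemK}), whence $R/I^{\scJ}_{k,k+1}$ is a quotient of the semisimple ring $R/\scJ$. The framing via the bimodule $N=\scJ^{d-k}/\scJ^{d+1-k}$ and its annihilator is a pleasant way to see it, though the preliminary paragraph on $N$ being semisimple as a \emph{right} module does no work and could be dropped.
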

\begin{proof}
It follows from Lemma~\ref{LemK} for $i=d$ and $L=\scJ$ that 
$$\scJ=I^{\scJ}_{d,d+1}\subset I^{\scJ}_{d-1,d}\subset\cdots\subset I_{k,k+1}^{\scJ}\subset\cdots \subset I_{1,2}^{\scJ}.$$
In particular, $R/I_{k,k+1}^{\scJ}$ is a quotient of the semisimple ring~$R/\scJ$ and thus also semisimple.
\end{proof}



\subsection{Elementary properties}

\begin{lemma}\label{LemInc}
If $\II $ is a system of ideals in $R$, then for all $1\le i,j,k\le d+1$ we have
\begin{enumerate}[(i)]
\item $I_{ij}\subset I_{ik}$,\hspace{2mm} if $j\ge k$;
\item $I_{jk}\subset I_{ik}$,\hspace{2mm} if $j\le i$.
\end{enumerate}
\end{lemma}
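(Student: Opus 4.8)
The plan is to deduce both inclusions directly from the two defining axioms (a) and (b) of Definition \ref{DefMultSyst}, using the fact that $R$ itself appears as $I_{mn}$ whenever $m\ge n$. The key observation is that multiplying an ideal by $R$ on either side leaves it unchanged (since $R$ is unital and the ideals are two-sided), so axiom (a) degenerates into an inclusion between two of the ideals in the system whenever one of the three indices forces a factor equal to $R$.

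For part (i), suppose $j\ge k$. Then by axiom (b) applied to the pair $(j,k)$ we have $I_{jk}=R$. Applying axiom (a) with the triple $(i,j,k)$ gives $I_{ij}I_{jk}\subset I_{ik}$, that is $I_{ij}R\subset I_{ik}$, and since $I_{ij}R=I_{ij}$ we conclude $I_{ij}\subset I_{ik}$. For part (ii), suppose $j\le i$. Then axiom (b) applied to the pair $(i,j)$ gives $I_{ij}=R$. Applying axiom (a) with the triple $(i,j,k)$ gives $I_{ij}I_{jk}\subset I_{ik}$, that is $RI_{jk}\subset I_{ik}$, and since $RI_{jk}=I_{jk}$ we conclude $I_{jk}\subset I_{ik}$.

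There is essentially no obstacle here: the only point requiring (minor) care is the bookkeeping of which index pair triggers axiom (b) and on which side the trivial factor then sits, but this is immediate from the statement of the axioms. One could alternatively phrase the whole argument as the single remark that in any $d$-system the assignment $(i,j)\mapsto I_{ij}$ is order-reversing in $j$ and order-preserving in $i$, which is exactly the content of the two items, but spelling out the two one-line derivations as above is the cleanest presentation.
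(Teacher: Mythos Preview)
Your proof is correct and follows exactly the same approach as the paper: use axiom~(b) to replace one factor by $R$ and then apply axiom~(a) to obtain the desired inclusion. The paper's proof is just the one-line display $I_{ij}=I_{ij}R=I_{ij}I_{jk}\subset I_{ik}$ for part~(i), with part~(ii) left as ``similarly''.
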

\begin{proof}
Part (i) follows by conditions (b) and (a) in Definition~\ref{DefMultSyst}, since
$$I_{ij}=I_{ij}R=I_{ij}I_{jk}\subset I_{ik}.$$
Part (ii) follows similarly.
\end{proof}

\begin{lemma}\label{LemTrun}
Let~$\II $ be a  $d$-system of ideals in~$R$. Then,
$$\II'\;:=\;\{I_{ij}\,|\, 1\le i,j\le d\}$$
is a  $d-1$-system of ideals in~$R$.
\end{lemma}

\subsection{Duality on the set of~$d$-systems of ideals}

There is a clear correspondence between the $d$-systems of ideals in a ring and its opposite.

\begin{lemma}
For $d$-system~$\II $ of ideals in~$R$,
we define ideals in~$R^{\op}$
$$\mathring{I}_{ij}\;:=\; I_{d+2-j,d+2-i}.$$
Then~$\mathring{\II}$ is a $d$-system of ideals in~$R^{\op}$, and~$\mathring{\mathring{\II}}=\II $. Furthermore, $\mathring{\II}$ is semisimple if and only if $\II$ is semisimple.
\end{lemma}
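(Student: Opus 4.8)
The plan is to check the three assertions in turn; the whole argument is bookkeeping around the order-reversing involution $i\mapsto d+2-i$ on $\{1,\dots,d+1\}$, together with two standard facts: that the two-sided ideals of $R$ and of $R^{\op}$ coincide as subsets (so each $\mathring{I}_{ij}$ is a well-defined two-sided ideal in $R^{\op}$), and that a ring is semisimple if and only if its opposite is (semisimplicity being left–right symmetric, see e.g.\ \cite{Lam}).

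First I would verify condition (a) of Definition~\ref{DefMultSyst} for $\mathring{\II}$. The key observation is that for subsets $X,Y\subset R$, the product ideal of $X$ and $Y$ formed in $R^{\op}$ equals the product $YX$ formed in $R$ (since the $R^{\op}$-product of $x$ and $y$ is the $R$-product $yx$). Writing $a=d+2-k$, $b=d+2-j$, $c=d+2-i$, this gives
$$\mathring{I}_{ij}\,\mathring{I}_{jk}\;=\;\mathring{I}_{jk}\cdot_R\mathring{I}_{ij}\;=\;I_{ab}I_{bc}\;\subset\;I_{ac}\;=\;I_{d+2-k,d+2-i}\;=\;\mathring{I}_{ik},$$
where the inclusion is condition (a) for $\II$. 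For condition (b): if $i\ge j$ then $d+2-j\ge d+2-i$, hence $\mathring{I}_{ij}=I_{d+2-j,d+2-i}=R$ by condition (b) for $\II$. This shows $\mathring{\II}$ is a $d$-system of ideals in $R^{\op}$.

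Next, the involutivity $\mathring{\mathring{\II}}=\II$ is immediate from applying the definition twice and using $(R^{\op})^{\op}=R$: indeed
$$(\mathring{\mathring{I}})_{ij}\;=\;(\mathring{I})_{d+2-j,\,d+2-i}\;=\;I_{d+2-(d+2-i),\;d+2-(d+2-j)}\;=\;I_{ij},$$
since $i\mapsto d+2-i$ is an involution. Finally, for the semisimplicity statement, I would substitute $m=d+1-k$ (which runs over $\{1,\dots,d\}$ as $k$ does, with $d+2-k=m+1$), obtaining
$$R^{\op}/\mathring{I}_{k,k+1}\;=\;R^{\op}/I_{m,m+1}\;=\;(R/I_{m,m+1})^{\op}.$$
By left–right symmetry of semisimplicity, this quotient is semisimple for all $1\le k\le d$ precisely when $R/I_{m,m+1}$ is semisimple for all $1\le m\le d$, i.e.\ exactly when $\II$ is semisimple.

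I do not expect any real obstacle here; the only points requiring a little care are the behaviour of ideal products under passage to the opposite ring in the first step, and the invocation of the left–right symmetry of semisimplicity in the last step.
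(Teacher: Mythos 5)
Your proof is correct. The paper states this lemma without proof (it is regarded as routine bookkeeping), and your argument supplies exactly what is needed: the index-reversal is compatible with conditions (a) and (b) once one accounts for the order reversal of ideal products in passing to $R^{\op}$, the involutivity follows from $i \mapsto d+2-i$ being an involution on $\{1,\dots,d+1\}$, and the semisimplicity equivalence follows from the reindexing $m = d+1-k$ together with the left--right symmetry of semisimplicity.
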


\begin{prop}
A semiprimary ring~$R$ is rigid if and only if $\mathring{\II}^{\scJ}$ is the Jacobson system of~$R^{\op}$.
\end{prop}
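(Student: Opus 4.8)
The statement to prove is: a semiprimary ring $R$ is rigid if and only if $\mathring{\II}^{\scJ}$ is the Jacobson system of $R^{\op}$. I want to unwind both sides into explicit conditions on powers of $\scJ$ and then match them using Lemma~\ref{LemRigid}. Write $\ell=\lole(R)$, so $\II^{\scJ}$ is an $\ell$-system. Note first that $\scJ$ is also the Jacobson radical of $R^{\op}$ (the radical is characterised by annihilation of all simples, which is left/right symmetric), so the Jacobson system of $R^{\op}$ makes sense and is again an $\ell$-system; call it $\II^{\scJ,\op}$ with entries $I^{\scJ,\op}_{ij}=\{x\in R\,|\,\scJ^{\ell+1-j}x\subset\scJ^{\ell+1-i}\}$, where I have used that multiplication in $R^{\op}$ reverses order. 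The claim is then the equality of $\ell$-systems $\mathring{\II}^{\scJ}=\II^{\scJ,\op}$ inside $R^{\op}$ (equivalently, inside $R$ as a collection of additive subgroups).

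First I would spell out the entries of $\mathring{\II}^{\scJ}$. By definition $\mathring{I}^{\scJ}_{ij}=I^{\scJ}_{\ell+2-j,\ell+2-i}=\{x\in R\,|\,x\scJ^{\ell+1-(\ell+2-i)}\subset\scJ^{\ell+1-(\ell+2-j)}\}=\{x\in R\,|\,x\scJ^{i-1}\subset\scJ^{j-1}\}$. So the desired identity reads, for all $1\le i,j\le\ell+1$,
$$\{x\in R\,|\,x\scJ^{i-1}\subset\scJ^{j-1}\}\;=\;\{x\in R\,|\,\scJ^{\ell+1-j}x\subset\scJ^{\ell+1-i}\}.$$
The ``only if'' direction: assume $R$ rigid. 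By Lemma~\ref{LemRigid} we have $\{x\,|\,\scJ^{\ell-k}x=0\}=\scJ^k=\{x\,|\,x\scJ^{\ell-k}=0\}$ for all $k$. The plan is to use these left/right annihilator symmetries to slide the condition $x\scJ^{i-1}\subset\scJ^{j-1}$ across. Concretely, $x\scJ^{i-1}\subset\scJ^{j-1}$ is equivalent to the image of $x$ in $R/\scJ^{j-1}$ having $\scJ^{i-1}$-action zero, i.e. $\bar x\in\soc_{\dots}(R/\scJ^{j-1})$ at the appropriate level; rigidity of $R_R$ says this socle layer coincides with a radical layer, namely $\bar x\in\rad^{?}(R/\scJ^{j-1})=\scJ^{?}/\scJ^{j-1}$, so the condition becomes membership of $x$ in a fixed power of $\scJ$ (up to $\scJ^{j-1}$); then I re-encode that fixed power using the left-annihilator description from rigidity of ${}_RR$ to land exactly on $\scJ^{\ell+1-j}x\subset\scJ^{\ell+1-i}$. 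I expect the bookkeeping of indices — tracking which Loewy layer of which quotient — to be the fiddly part, but purely mechanical once set up via Lemma~\ref{socradJ}.

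For the ``if'' direction, I would not re-derive everything but instead extract the rigidity criterion of Lemma~\ref{LemRigid} from the assumed equality by specialising $i,j$. Taking $j=1$ (so $\scJ^{j-1}=\scJ^0=R$), the left-hand set is all of $R$, and the right-hand set is $\{x\,|\,\scJ^{\ell}x\subset\scJ^{\ell+1-i}\}$; since $\scJ^{\ell}=0$ this is automatically $R$, so $j=1$ is vacuous — instead the useful specialisations are those that isolate $\scJ^k=\mathring{I}^{\scJ}_{i,j}$ for suitable $i,j$ via Lemma~\ref{LemK} (which gives $I^{\scJ}_{i,\ell+1}=\scJ^{\ell+1-i}$ and $I^{\scJ}_{1,\ell+2-i}=\{x\,|\,x\scJ^{i-1}=0\}$). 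Matching the end terms of the chain in Lemma~\ref{LemK} for $\II^{\scJ}$ and for $\II^{\scJ,\op}$ should directly yield $\{x\,|\,x\scJ^{\ell-k}=0\}=\scJ^k=\{x\,|\,\scJ^{\ell-k}x=0\}$, which is exactly the rigidity condition of Lemma~\ref{LemRigid}. The main obstacle, then, is really just the ``only if'' direction's index arithmetic; the ``if'' direction should fall out of comparing the two Lemma~\ref{LemK} chains and invoking Lemma~\ref{LemRigid}.
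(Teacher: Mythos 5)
Your unwinding of the definitions is correct and your ``if'' direction is exactly the paper's argument: compare the endpoints of the chains in Lemma~\ref{LemK} for $\II^{\scJ}$ and for the Jacobson system of $R^{\op}$, and conclude rigidity from Lemma~\ref{LemRigid}. For the ``only if'' direction, however, you switch to a detour through socle/radical layers of the quotients $R/\scJ^{j-1}$ and declare the rest ``purely mechanical.'' It is not quite: the step you are eliding is the observation that rigidity of $R_R$ forces $\{x\,|\,x\scJ^{i-1}\subset\scJ^{j-1}\}=\scJ^{j-i}$, which is \emph{not} an immediate consequence of rigidity of $R_R$ --- it requires the extra argument of multiplying by a further power of $\scJ$ to reduce the condition $x\scJ^{i-1}\subset\scJ^{j-1}$ to an annihilator condition $x\scJ^{\,\ell-(j-i)}=0$ (equivalently, that the whole chain $\scJ^{\ell+1-i_0}=I^{\scJ}_{i_0,\ell+1}\subset\cdots\subset I^{\scJ}_{1,\ell+2-i_0}=\{x\,|\,x\scJ^{i_0-1}=0\}$ collapses when its two ends are identified). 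The paper handles this cleanly by using Lemma~\ref{LemK} for \emph{both} directions: rigidity plus Lemma~\ref{LemRigid} pins both ends of each chain to the same power of $\scJ$, hence the entire chain (and the corresponding one in $R^{\op}$) collapses, so every $\mathring{I}^{\scJ}_{ij}$ and every $\widehat{I}^{\scJ}_{ij}$ equals $\scJ^{j-i}$ for $i\le j$. You already have all the pieces (Lemma~\ref{LemK} is invoked for the other direction), so just apply it symmetrically rather than rederiving the collapse through Loewy filtrations of $R/\scJ^{j-1}$, which would in any case require you to first verify that such quotients inherit rigidity.
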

\begin{proof}
We denote by~$\II^{\scJ}$ the Jacobson system for $R$ and by~$\widehat{\II}^{\scJ}$ the one for $R^{\op}$.

Assume first that $\widehat{\II}^{\scJ}=\mathring{\II}^{\scJ}$. In particular, we have
$$I^{\scJ}_{i,d+1}=\widehat{I}^{\scJ}_{1,d+2-i}\quad\mbox{and}\quad I^{\scJ}_{1,d+2-i}=\widehat{I}^{\scJ}_{i,d+1},\quad\mbox{for all~$1\le i\le d+1$}.$$
By Lemmata~\ref{LemK} and~\ref{LemRigid}, this means that $R$ is rigid.

Now assume that $R$ is rigid.
Lemmata~\ref{LemK} and~\ref{LemRigid} then imply equalities
$$\scJ^{d+2-i}=I^{\scJ}_{i,d+1}=I^{\scJ}_{i-1,d}=\cdots=I^{\scJ}_{1,d+2-i}\,,\qquad\mbox{for all~$1\le i\le d+1$,}$$
and the same equalities for $\widehat{\II}$. In particular, for $i\le j$, we have
$$\mathring{I}^{\scJ}_{ij}=I^{\scJ}_{d+2-j,d+2-i}=I^{\scJ}_{d+1+i-j,d+1}=\scJ^{d+2-i}=\widehat{I}^{\scJ}_{d+1+i-j,d+1}=\widehat{I}^{\scJ}_{ij}$$
and thus~$\mathring{\II}^{\scJ}=\widehat{\II}^{\scJ}$.
\end{proof}


\section{A generalisation of the ADR procedure}\label{SecDefA}

\subsection{Definition}\label{SecDefAlg}
Fix an arbitrary ring $R$ for this subsection.
\subsubsection{}\label{DefX} Consider a $d$-system~$\II $ of ideals in~$R$. By Lemma~\ref{LemInc}(i), we have $I_{i,d+1}\subset I_{ij}$, for all~$i,j$. We define abelian groups
$$X_{ij}:=I_{ij}/I_{i,d+1},\qquad\mbox{for all~$1\le i,j\le d$}.$$
Then we have group homomorphisms
$$\varphi_{ijkl}:X_{ij}\times X_{kl}\to X_{il},\; (x+ I_{i,d+1},y+I_{k,d+1})\mapsto \delta_{jk}(xy +I_{i,d+1}),$$
for $x\in I_{ij}$ and~$y\in I_{kl}$.
To argue this is well-defined we can assume $j=k$. We have $xy\in I_{il}$ by Definition~\ref{DefMultSyst}(a). Furthermore, for any~$z\in I_{j,d+1}$, we have $xz\in I_{i,d+1}$ by Definition~\ref{DefMultSyst}(a), so the right-hand side does not depend on the representative of~$y+I_{k,d+1}$. The right-hand side does not depend on the representative of~$x+I_{i,d+1}$ since $I_{i,d+1}$ is an ideal.

\begin{ddef}\label{DefAlg}
For a $d$-system~$\II $ of ideals in~$R$, the abelian group
$$\scA(R,\II ):=\bigoplus_{1\le i,j\le d}X_{ij},$$
is equipped with product $ab=\varphi_{ijkl}(a,b)$ for $a\in X_{ij}$ and~$b\in X_{kl}$.
\end{ddef}
It follows easily that this product makes~$\scA(R,\II )$ into a monoid and consequently that $\scA(R,\II )$ is a ring. Note that distributivity follows automatically since $\varphi_{ijkl}$ is a group homomorphism.

A special case of the above construction also appears, although for $d=\infty$, in~\cite{Borel}. 

\begin{ex}
For $d=1$, see Example~\ref{Ex1}, we simply have
$$\scA(R,\II)\;=\; R/I.$$
\end{ex}

It can be convenient to think about the rings~$\scA(R,\II)$ as rings consisting of~$d\times d$-matrices, with $(i,j)$-entries taking values in~$X_{ij}$.
\begin{ex}
For $d=2$, with notation as in Example~\ref{Ex2}, we have
$$\scA(R,\II)\;=\; \left(\begin{array}{cc} R/I&K/I\\R/L&R/L \end{array}\right).$$
\end{ex}

\subsection{Example: the ADR procedure}

Let~$R$ be a semiprimary ring of Loewy length $d=\lole(R)$, with Jacobson radical $\scJ$. We view $R/\scJ^i$ as a right $R$-module. The ADR ring~$\scA(R)$ of \cite{Auslander, DRAus} is defined as
$$\scA(R)\;:=\; \End_R\left(\bigoplus_{i=1}^d (R/\scJ^i)_R\right).$$

\begin{prop}\label{PropADR}
For  $R$ a semiprimary ring, we have a ring isomorphism
$$\scA(R)\;\simeq\; \scA(R,\II^{\scJ}).$$
\end{prop}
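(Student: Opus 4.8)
The plan is to construct an explicit ring isomorphism between $\scA(R,\II^{\scJ})$ and the endomorphism ring $\scA(R) = \End_R\big(\bigoplus_{i=1}^d (R/\scJ^i)_R\big)$, and the natural thing is to match the $(i,j)$-block $X_{ij} = I^{\scJ}_{ij}/I^{\scJ}_{i,d+1}$ with $\Hom_R\big((R/\scJ^{d+1-j})_R,\,(R/\scJ^{d+1-i})_R\big)$. First I would recall the standard identification: since $(R/\scJ^{m})_R$ is a cyclic right module generated by $1 + \scJ^m$, any $R$-module homomorphism $(R/\scJ^{m})_R \to (R/\scJ^{n})_R$ is determined by the image of the generator, and this image is some $x + \scJ^n$ with the constraint that $(x+\scJ^n)\scJ^m = 0$ in $R/\scJ^n$, i.e. $x\scJ^m \subset \scJ^n$; conversely any such $x$ gives a well-defined homomorphism (left multiplication by $x$ followed by projection), and two elements $x, x'$ give the same map iff $x - x' \in \scJ^n$. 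Setting $m = d+1-j$ and $n = d+1-i$, the set of such $x$ is precisely $I^{\scJ}_{ij} = \{x \in R \mid x\scJ^{d+1-j} \subset \scJ^{d+1-i}\}$ by Example~\ref{exaK} (with $L = \scJ$), and the ambiguity is exactly $\scJ^{d+1-i} = I^{\scJ}_{i,d+1}$ by Lemma~\ref{LemK}. Hence there is a canonical bijection $X_{ij} \xrightarrow{\ \sim\ } \Hom_R\big((R/\scJ^{d+1-j})_R, (R/\scJ^{d+1-i})_R\big)$ for each pair $1 \le i,j \le d$.

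Next I would assemble these block isomorphisms into an additive isomorphism $\Phi\colon \scA(R,\II^{\scJ}) = \bigoplus_{i,j} X_{ij} \to \scA(R)$, where on the right we view $\scA(R)$ as the ring of ``matrices'' $(f_{ij})$ with $f_{ij} \in \Hom_R\big((R/\scJ^{d+1-j})_R, (R/\scJ^{d+1-i})_R\big)$ under matrix multiplication of homomorphisms. (The reindexing $i \mapsto d+1-i$ just relabels the summands $R/\scJ^1, \ldots, R/\scJ^d$ in reverse order, which does not change the endomorphism ring.) Then I would verify that $\Phi$ is a ring homomorphism. It is additive by construction, so the content is multiplicativity. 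Take $a \in X_{ij}$ represented by $x \in I^{\scJ}_{ij}$ and $b \in X_{kl}$ represented by $y \in I^{\scJ}_{kl}$; the product in $\scA(R,\II^{\scJ})$ is $\delta_{jk}(xy + I^{\scJ}_{i,d+1})$ by Definition~\ref{DefAlg}. On the other side, composing the homomorphism ``left multiplication by $y$'' $(R/\scJ^{d+1-l}) \to (R/\scJ^{d+1-k})$ with ``left multiplication by $x$'' $(R/\scJ^{d+1-j}) \to (R/\scJ^{d+1-i})$ is zero unless $j = k$ (the source and target don't match otherwise), and when $j = k$ the composite sends the generator to $xy + \scJ^{d+1-i}$, i.e. it is left multiplication by $xy$. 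So $\Phi(ab) = \Phi(a)\Phi(b)$, and $\Phi$ sends the identity $\sum_i (1 + I^{\scJ}_{i,d+1}) \in \bigoplus_i X_{ii}$ to the identity matrix $(\id_{R/\scJ^{d+1-i}})$. Together with the bijectivity established blockwise, this shows $\Phi$ is a ring isomorphism.

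The steps here are all essentially routine, so there is no serious obstacle; the one point that deserves care is the bookkeeping around the opposite-ring/left-versus-right conventions. The modules $(R/\scJ^i)_R$ are \emph{right} $R$-modules, so their endomorphisms act on the \emph{left}, and composition order must be tracked so that the two multiplications genuinely agree rather than being related by an anti-isomorphism; concretely, one must check that ``apply $f_{kl}$ first, then $f_{ij}$'' on the right corresponds to the left-to-right product $xy$ in $R$ appearing in $\varphi_{ijkl}$, which it does because left multiplication by $x$ then by $y$... — more precisely because $(\cdot x)\circ(\cdot y)$ as operations, read in the endomorphism ring, compose to multiplication by $xy$ with the correct handedness once one is consistent about whether $\End$ is taken with the $\op$ or not. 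Verifying this consistency is the only place where a sign/order error could creep in; everything else follows directly from Example~\ref{exaK} and Lemma~\ref{LemK}.
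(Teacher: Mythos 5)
Your proposal is correct and takes essentially the same route as the paper: both identify each block $X_{ij}$ with $\Hom_R\bigl((R/\scJ^{d+1-j})_R,(R/\scJ^{d+1-i})_R\bigr)$ by evaluating a homomorphism at the cyclic generator $1+\scJ^{d+1-j}$, and then verify that composition of homomorphisms matches the product $\varphi_{ijkl}$ (the paper derives the block isomorphism from the left exact $\Hom$ sequence attached to $0\to\scJ^{d+1-j}\to R\to R/\scJ^{d+1-j}\to0$, but the computation is the same). The worry about handedness in your final paragraph is already resolved by the explicit check in your second paragraph: $\Phi(a)\circ\Phi(b)$ sends the generator to $xy+\scJ^{d+1-i}$, so $\Phi$ intertwines ordinary composition in $\End_R$ with $\varphi_{ijkl}$, and no $\op$ is needed.
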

\begin{proof}
Consider the exact sequence
$$0\to \Hom_R(R/\scJ^{d+1-j},R/\scJ^{d+1-i})\to \Hom_R(R,R/\scJ^{d+1-i})\to  \Hom_R(\scJ^{d+1-j},R/\scJ^{d+1-i}). $$
It follows that we have group isomorphisms
$$\Hom_R(R/\scJ^{d+1-j},R/\scJ^{d+1-i})\,\stackrel{\sim}{\to}\,\{x\in R\,|\,x\scJ^{d+1-j}\subset \scJ^{d+1-i} \}/\scJ^{d+1-i},$$
given by $\alpha\mapsto \alpha(1+\scJ^{d+1-j})$.
By Lemma~\ref{LemK}, we have $I_{i,d+1}^{\scJ}=\scJ^{d+1-i}.$ By the definition of~$I^{\scJ}$ in Example~\ref{exaK}, we thus find
isomorphisms
$$\Hom_R(R/\scJ^{d+1-j},R/\scJ^{d+1-i})\;\stackrel{\sim}{\to}\; X_{ij}.$$
and thus a group isomorphism between~$\scA(R)$ and~$\scA(R,\II^{\scJ})$.

Since the product in $\scA(R)$ corresponds to
$$\Hom_R(R/\scJ^{d+1-j},R/\scJ^{d+1-i})\times \Hom_R(R/\scJ^{d+1-k},R/\scJ^{d+1-j})$$
$$\to\Hom_R(R/\scJ^{d+1-k},R/\scJ^{d+1-i}),\quad (\alpha,\beta)\mapsto \alpha\circ \beta,$$
it follows that the isomorphism intertwines the product on~$\scA(R)$ and the product on~$\scA(R,\II^{\scJ })$ defined in Section~\ref{SecDefAlg}.
\end{proof}

\subsection{Chain of ideals}
Consider $\scA(R,\II )$ as in Section~\ref{SecDefAlg}, for a $d$-system of ideals in~$R$.
\subsubsection{} For all~$1\le k\le d$, we define
$$e_k:=1+I_{k,d+1}\in R/I_{k,d+1}=X_{kk}\subset \scA(R,\II ). $$
Then we have
$$1=\sum_{k=1}^d e_k\qquad\mbox{and}\qquad e_ke_l=\delta_{kl}e_k,\;\;\mbox{for $1\le k,l\le d$}.$$
We set~$f_j:=\sum_{k> j}e_k$, for $0\le j<d$, and $f_d=0$. In particular, we have $f_0=1$. We define the corresponding idempotent ideals~$J_j:=\scA(R,\II)f_j\scA(R,\II)$.

\begin{thm}\label{ThmChain}Set~$A:=\scA(R,\II)$ for $\II$ a $d$-system in~$R$. 
The chain of idempotent ideals
$$0=J_d\subset J_{d-1}\subset J_{d-2}\subset\cdots\subset J_1\subset J_0=A,$$
is such that the left $A/J_{i}$-module~$J_{i-1}/J_{i}$ is projective and we have a ring isomorphism
$$e_i A/J_{i} e_i\;\simeq\; R/I_{i,i+1},\qquad\mbox{for all $1\le i\le d$.}$$
\end{thm}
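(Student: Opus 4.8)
The plan is to unpack the definitions of the idempotent ideals $J_j = A f_j A$ in terms of the matrix picture for $A = \scA(R,\II)$, identify $J_{i-1}/J_i$ explicitly, and then verify the two claims (projectivity over $A/J_i$ and the ring isomorphism $e_i A/J_i e_i \simeq R/I_{i,i+1}$) directly. First I would record that, because $e_k = 1 + I_{k,d+1}$ is the identity of the corner ring $X_{kk} = R/I_{k,d+1}$ and the multiplication $\varphi_{ijkl}$ carries an $X_{ij}$ into the "row $i$, column $j$" slot, one has $e_i A e_j = X_{ij}$; consequently $f_j A = \bigoplus_{i > j}\bigoplus_{k} X_{ik}$ (rows indexed by $i > j$) and $A f_j = \bigoplus_{l}\bigoplus_{k > j} X_{lk}$ (columns indexed by $k > j$). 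Multiplying these, $J_j = A f_j A = \bigoplus_{i,l} \big(\sum_{k>j} X_{ik}X_{kl}\big)$, and using Lemma~\ref{LemInc} together with $I_{ik}I_{kl}\subset I_{il}$ one checks that the $(i,l)$-entry of $J_j$ is $(I_{i,j+1}/I_{i,d+1})$ when... — more precisely, the relevant entry is the image in $X_{il}$ of $\sum_{k>j} I_{ik}I_{kl}$, which by Lemma~\ref{LemInc}(i) contains $I_{i,j+1}I_{j+1,l}$; the cleanest route is to show the $(i,l)$-entry of $J_j$ equals $I_{i,\min(j+1,l)}/I_{i,d+1}$ for $l > j$ and the full $X_{il}$-dependent expression for $l \le j$ — in any case, the subquotient $J_{j-1}/J_j$ is concentrated in row $j$ (the summands with $i \le j-1$ cancel between consecutive terms once one notes $I_{ij}=R$ for $i\ge j$), so that $J_{j-1}/J_j$ lives entirely in $e_j A$.

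Granting this, the second assertion is the easier one: $e_i J_i e_i$ is the image in $X_{ii} = R/I_{i,d+1}$ of $\sum_{k > i} I_{ik}I_{ki} = \sum_{k>i} I_{ik}$ (since $I_{ki}=R$ for $k > i$), which by Lemma~\ref{LemInc}(i) equals $I_{i,i+1}$; hence $e_i A/J_i e_i = (R/I_{i,d+1})\big/(I_{i,i+1}/I_{i,d+1}) \cong R/I_{i,i+1}$, and since this identification is induced by the ring structure on the corner $X_{ii}$ it is a ring isomorphism. For the first assertion, I would identify $J_{i-1}/J_i$ as a left $A/J_i$-module. Because it sits inside $e_i A$ (row $i$), and $e_i A/J_i$... the natural candidate is that $J_{i-1}/J_i \cong (A/J_i)e_i \otimes_{e_i(A/J_i)e_i} (\text{something})$, i.e.\ it is a sum of copies of the projective $(A/J_i)e_i$. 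Concretely, as a left $A$-module, row $i$ of $J_{i-1}$ is generated by the entries in the $(i,i)$-slot — which is all of $X_{ii} = R/I_{i,d+1}$ since $I_{i,i}=R$ — and the left action of $A$ on these entries factors through $A/J_i$ and reproduces exactly the columns of row $i$ appearing in $J_{i-1}/J_i$; choosing a generating set of $R/I_{i,i+1}$ as a module over the semisimple-ish corner, one writes $J_{i-1}/J_i$ as a direct sum of copies of $(A/J_i)e_i$, which is projective.

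The main obstacle I anticipate is bookkeeping: getting the $(i,l)$-entries of $J_j$ exactly right, and in particular verifying cleanly that $J_{i-1}/J_i$ is \emph{projective} as an $A/J_i$-module rather than merely a quotient of a projective. The subtlety is that the left $A$-module structure on row $i$ of $A$ is, in the matrix picture, "column-wise independent" — the action of $e_k A e_i$ on $e_i A e_l$ only sees the factor $X_{ki}$ acting by the ring multiplication $R/I_{k,d+1}$-side — so row $i$ of $A$ is literally $\bigoplus_l X_{il}$ with the left $A$-action determined by how $R/I_{i,d+1}$ (acting through $X_{ii}\to X_{il}$, i.e.\ via inclusions of ideals) and the lower rows interact; one must check this makes $(A/J_i)e_i$ into a projective generator behaviour in the right way, using that $e_i(A/J_i)e_i = R/I_{i,i+1}$ and that multiplication $I_{i,i+1}/I_{i,d+1}\otimes(-)$ behaves as required. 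I would handle this by invoking that $e_i A e_i \to e_i A$ realizes $e_i A$ as... and more simply: $(A/J_i)e_i$ is by definition a projective left $A/J_i$-module (a summand of $A/J_i$), and the content is just the \emph{isomorphism} $J_{i-1}/J_i \cong \big((A/J_i)e_i\big)^{\oplus m}$ for suitable $m$, or more precisely $J_{i-1}/J_i \cong (A/J_i)e_i \otimes_{R/I_{i,i+1}} (R/I_{i,i+1})$-type statement — establishing that isomorphism of left $A/J_i$-modules, by exhibiting the generators in the $(i,i)$-entry and checking the relations match, is where the real work lies. Everything else (the chain being a chain of idempotent two-sided ideals, the ring isomorphism) is formal from Lemmas~\ref{LemInc}, \ref{LemTrun} and the definitions in \ref{DefX}.
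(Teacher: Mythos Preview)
Your computation of the ring isomorphism $e_i(A/J_i)e_i\cong R/I_{i,i+1}$ is correct and in fact more direct than the paper's: one has $e_iJ_ie_i=\sum_{k>i}e_iAe_kAe_i$, the $(i,i)$-entry of $Ae_kA$ is the image of $I_{ik}I_{ki}=I_{ik}$ (since $I_{ki}=R$ for $k>i$), and by Lemma~\ref{LemInc}(i) the union over $k>i$ is $I_{i,i+1}/I_{i,d+1}$.

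The projectivity argument, however, has a genuine error at its organising step. The claim that $J_{j-1}/J_j$ ``lives entirely in $e_jA$'' (is concentrated in row $j$) is false. In $A/J_j$ the image of $f_{j-1}$ is $e_j$, so $J_{j-1}/J_j$ is the \emph{two-sided} ideal generated by $e_j$ in $A/J_j$; for $i<j$ one has $e_i(A/J_j)e_j\supset$ the image of $X_{ij}=I_{ij}/I_{i,d+1}$, which is generically nonzero. Already for $j=d$ (where $J_d=0$) the paper's Proposition~\ref{Prop4Iter}(i) shows $J_{d-1}=Ae_dA\simeq (Ae_d)^{\oplus d}$, which has nonzero component $X_{id}$ in \emph{every} row $i$. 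Since your subsequent sketch (generators in the $(i,i)$-slot, factoring the action, etc.) is built on this ``single row'' picture, it does not go through; and the later passages where you acknowledge the difficulty never actually produce the needed isomorphism $J_{i-1}/J_i\cong\big((A/J_i)e_i\big)^{\oplus m}$.

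The paper sidesteps the bookkeeping you are worried about by \emph{not} computing $J_j$ for all $j$ at once. It argues by induction on $d$, with Proposition~\ref{Prop4Iter} supplying the inductive step: (i) $Ae_dA\simeq (Ae_d)^{\oplus d}$ as left $A$-modules, proved by the one-line calculation $e_iAe_dAe_l=(I_{id}I_{dl})/I_{i,d+1}=I_{id}/I_{i,d+1}=e_iAe_d$ (using $I_{dl}=R$); (ii) $e_dAe_d\cong R/I_{d,d+1}$; and, crucially, (iii) $A/J_{d-1}\cong\scA(R,\II')$ for the truncated $(d{-}1)$-system $\II'$ of Lemma~\ref{LemTrun}. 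Part (iii) reduces the remaining layers $i<d$ to the same statement for a smaller $d$. Thus one only ever has to prove projectivity for the \emph{bottom} idempotent, where the computation is clean; your direct approach would essentially need to redo this calculation inside each quotient $A/J_j$, which is exactly what the inductive identification (iii) does for free.
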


By Proposition~\ref{PropADR}, the restriction of Theorem~\ref{ThmChain} to semisimple systems of ideals, generalises (and provides an alternative proof for) the main theorem in~\cite{DRAus}.
We start by proving the following proposition.

\begin{prop}\label{Prop4Iter}
Set~$A:=\scA(R,\II)$ for $\II$ a $d$-system in~$R$. 
\begin{enumerate}[(i)]
\item We have an isomorphism of left $A$-modules~$J_{d-1}=Ae_dA\;\simeq\; (Ae_d)^{\oplus d}.$
\item We have a ring isomorphism $e_d Ae_d\;\simeq\; R/I_{d,d+1}$
\item We have a ring isomorphism
$$A/(Ae_dA)\;\simeq\; \scA(R,\II'),$$
with $\II'$ the $d-1$-system of Lemma~\ref{LemTrun}.
\end{enumerate}
\end{prop}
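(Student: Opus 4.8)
The plan is to compute everything in terms of the matrix-like description of $A=\scA(R,\II)$, where the $(i,j)$-block is $X_{ij}=I_{ij}/I_{i,d+1}$ and multiplication is induced by the maps $\varphi_{ijkl}$. First I would identify the relevant pieces explicitly: the idempotent $e_d=1+I_{d,d+1}\in X_{dd}$ picks out the last row and column, so $Ae_d=\bigoplus_{i=1}^d X_{id}$ (the $d$-th column) and $e_d A=\bigoplus_{j=1}^d X_{dj}$. For (ii), note $e_dAe_d=X_{dd}=I_{dd}/I_{d,d+1}=R/I_{d,d+1}$ since $I_{dd}=R$ by Definition~\ref{DefMultSyst}(b), and the ring structure on $X_{dd}$ is literally the quotient ring structure, so the isomorphism $e_dAe_d\simeq R/I_{d,d+1}$ is immediate from the definition of $\varphi_{dddd}$.

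For (i), I would show $Ae_dA=\bigoplus_{1\le i\le d,\,1\le j\le d}X_{id}\cdot X_{dj}$, and the key point is that this equals the whole of $J_{d-1}$, i.e.\ that $X_{id}X_{dj}=X_{ij}$ for all $i,j$. The inclusion $X_{id}X_{dj}\subset X_{ij}$ is $\varphi$; for the reverse, since $I_{dj}=R$ (as $d\ge j$ would fail only if $j>d$, but $j\le d$, so indeed $I_{dj}=R$ by (b)) we get $X_{dj}=R/I_{d,d+1}$ contains the image of $1$, and similarly $I_{id}$ surjects appropriately — more precisely $X_{id}=I_{id}/I_{i,d+1}$ and multiplying $I_{id}$ by $1\in R=I_{dj}$ recovers $I_{id}\subset I_{ij}$; combined with $I_{ij}=I_{ij}I_{jd}I_{dj}\subset\ldots$ one checks $X_{ij}\subset X_{id}X_{dj}$. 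Hence $J_{d-1}=Ae_dA=\bigoplus_{i,j}X_{ij}$ exhausts all blocks, and as a left $A$-module it is a direct sum over the column index $j$ of the $j$-th column $\bigoplus_i X_{ij}$; each such column is isomorphic to $Ae_d=\bigoplus_i X_{id}$ as a left $A$-module, via right multiplication by a suitable element of $X_{dj}=R/I_{d,d+1}$ (namely the class of $1$), using again that $I_{dj}=R$. This gives $J_{d-1}\simeq (Ae_d)^{\oplus d}$; in particular $Ae_dA$ is projective as a left $A/J_d=A$-module, which is the projectivity assertion for this stage of the chain.

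For (iii), the quotient $A/(Ae_dA)$ as an abelian group is $\bigoplus_{1\le i,j\le d-1}X_{ij}$ once we have shown in (i) that $Ae_dA$ contains precisely the blocks involving index $d$ together with \emph{all} of $X_{ij}$ — wait, that's too much; so instead the correct statement is that $Ae_dA$ as a subspace is $\bigoplus_{i,j}X_{id}X_{dj}$, and one must verify $X_{id}X_{dj}=X_{ij}$ \emph{only} when this is forced, but for $i,j\le d-1$ this may be a proper subgroup. Let me reconsider: $J_{d-1}$ is generated by $e_d$, so $J_{d-1}=Ae_dA$, and the quotient $A/Ae_dA$ should be $\scA(R,\II')$ with $\II'=\{I_{ij}\mid 1\le i,j\le d\}$ viewed as a $(d-1)$-system. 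The point is that $\scA(R,\II')=\bigoplus_{1\le i,j\le d-1}I_{ij}/I_{id}$, whereas the image of $X_{ij}=I_{ij}/I_{i,d+1}$ in $A/Ae_dA$ is $I_{ij}/(I_{i,d+1}+(X_{id}X_{dj}\text{-part}))=I_{ij}/I_{id}$ precisely because $X_{id}X_{dj}\supset \varphi(I_{id},R)=$ image of $I_{id}$, so the extra relations collapse $I_{i,d+1}$ up to $I_{id}$. So the map $\scA(R,\II)\to\scA(R,\II')$ sending $x+I_{i,d+1}\mapsto x+I_{id}$ on the $(i,j)$-block with $i,j\le d-1$ (and killing blocks with $i=d$ or $j=d$) is a surjective ring homomorphism with kernel $Ae_dA$; checking it respects products is a direct comparison of the two definitions of $\varphi$. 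The main obstacle is the bookkeeping in (i): pinning down exactly that $X_{id}X_{dj}=I_{ij}/(I_{i,d+1}+\ker)$ reduces, via $I_{ij}\supset I_{id}I_{dj}=I_{id}R=I_{id}$ and the definition of the multiplication maps, to the clean statement that the image of $Ae_dA$ in the $(i,j)$-block is exactly $I_{id}/I_{i,d+1}$; once that is established, (i), (ii), (iii) all follow by matching definitions.
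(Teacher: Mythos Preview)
Your argument for part~(i) contains a genuine error that you only catch while working on~(iii). You claim that $X_{id}X_{dj}=X_{ij}$ for all $i,j$, and conclude ``$J_{d-1}=Ae_dA=\bigoplus_{i,j}X_{ij}$ exhausts all blocks''. This would say $Ae_dA=A$, which is false in general (it would force $\scA(R,\II')=0$). The map you describe, right multiplication by the class of $1\in X_{dj}=R/I_{d,d+1}$, sends $Ae_d$ into $Ae_j$ by the blockwise inclusion $I_{id}/I_{i,d+1}\hookrightarrow I_{ij}/I_{i,d+1}$; this is injective but not surjective unless $I_{id}=I_{ij}$. What is true, and what the paper proves directly, is that
\[
e_iAe_dAe_j \;=\; \im\varphi_{iddj}\;=\;(I_{id}I_{dj})/I_{i,d+1}\;=\;I_{id}/I_{i,d+1}\;=\;e_iAe_d,
\]
using only $I_{dj}=R$. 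Hence $Ae_dAe_j\simeq Ae_d$ as left $A$-modules for every $j$, and summing over $j$ gives $Ae_dA\simeq (Ae_d)^{\oplus d}$. This is precisely the statement you arrive at in your last paragraph (``the image of $Ae_dA$ in the $(i,j)$-block is exactly $I_{id}/I_{i,d+1}$''), so your final diagnosis is correct and coincides with the paper's argument; but the route you take through~(i) as written does not establish it, and the intermediate claim there is false.

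Once the block computation $e_iAe_dAe_j=I_{id}/I_{i,d+1}$ is in hand, your treatment of~(ii) and~(iii) is correct and is exactly what the paper does: part~(ii) is immediate from $X_{dd}=R/I_{d,d+1}$, and for~(iii) one reads off $e_i(A/Ae_dA)e_j\simeq I_{ij}/I_{id}$ for $1\le i,j\le d-1$, matching $\scA(R,\II')$ by definition.
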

\begin{proof}
For arbitrary~$1\le i,j\le d$, we have group isomorphisms
$$e_iAe_dAe_j\;=\;\im\varphi_{iddj}\;\simeq\;(I_{id}I_{dj})/I_{i,d+1}\;=\;I_{id}/I_{i,d+1}\;\simeq\; e_iAe_d,$$
which follow by definition of $\scA(R,\II)$ and the fact that $I_{dj}=R$ by Definition~\ref{DefMultSyst}(b).
We thus find a group isomorphism
$Ae_dAe_j\simeq Ae_d$, for all~$1\le j\le d$, which is by construction a morphism of left~$A$-modules. This proves part (i).

Part (ii) is by definition, since $e_dAe_d=X_{dd}= I_{dd}/I_{d,d+1}$ and~$I_{dd}=R$.

For part (iii), set~$A':=A/(Ae_d A)$. It follows from the proof of part (i) that for $1\le i,j\le d-1$
$$e_iA'e_j\;=\; (I_{ij}/I_{i,d+1})/(I_{id}/I_{i,d+1})\;\simeq\; I_{ij}/I_{id}\;\simeq\; e_iA(R,\II')e_j.$$
That this yields a ring isomorphism follows again by construction.
\end{proof}

\begin{rem}
(1) Proposition~\ref{Prop4Iter}(i) implies that the left $A$-module $Ae_dA$ is projective. It seems that the corresponding claim for $Ae_dA$ as a right $A$-module fails outside of the more restrictive setting in Theorem~\ref{ThmMain}. 

(2) Even when $\II=\II^{\scJ}$ and hence $\scA(R,\II)$ is an ADR ring, the ring $\scA(R,\II')$ will not be. This is the reason why extending the class of ADR rings to the class in Section~\ref{SecDefAlg} actually makes the proof of their quasi-heredity easier.
\end{rem}

\begin{proof}[Proof of Theorem~\ref{ThmChain}]
We prove this by induction on $d$. For $d=1$, there is nothing to prove. Assume the statement is true for $d< d_0$ and consider a $d_0$-system~$\II$. By Proposition~\ref{Prop4Iter}(iii), the claim for $i<d_0$ reduces to the corresponding claims for the algebra~$\scA(R,\II')$ for the $d-1$-system~$\II'$. The statement for $i=d_0$ is precisely Proposition~\ref{Prop4Iter}(i) and (ii). \end{proof}

\subsection{The $\Delta$-modules}
Consider $A:=\scA(R,\II )$ as in Section~\ref{SecDefAlg}. 

\subsubsection{}\label{DefDelta}
For $1\le k\le d$, we define the left $A$-module
$$\Delta_k\;:=\; (A/J_{k})e_k\;=\; Ae_k/Af_{k}Ae_k.$$

\begin{lemma}\label{sesDelta}
We have a short exact sequence of~$A$-modules
$$0\to A e_{k+1}\stackrel{\psi}{\to} Ae_k\to \Delta_k\to 0,\qquad\mbox{for $1\le k\le d$,} $$
where $\psi: Ae_{k+1}\to Ae_k$ restricts to the canonical inclusion of~$I_{i,k+1}/I_{i,d+1}=e_iAe_{k+1}$ into $I_{i,k}/I_{i,d+1}=e_iAe_k$, see Lemma~\ref{LemInc}(i), for all $1\le i\le d$.
\end{lemma}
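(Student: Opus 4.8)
The plan is to exhibit the map $\psi\colon Ae_{k+1}\to Ae_k$ explicitly component-by-component and then verify exactness directly. Recall that as a left $A$-module $Ae_{k+1}=\bigoplus_{i=1}^d e_iAe_{k+1}=\bigoplus_{i=1}^d I_{i,k+1}/I_{i,d+1}$ and $Ae_k=\bigoplus_{i=1}^d e_iAe_k=\bigoplus_{i=1}^d I_{i,k}/I_{i,d+1}$. By Lemma~\ref{LemInc}(i) we have $I_{i,k+1}\subset I_{i,k}$ for each $i$, so the inclusions induce well-defined group homomorphisms $I_{i,k+1}/I_{i,d+1}\hookrightarrow I_{i,k}/I_{i,d+1}$ (note $I_{i,d+1}$ is contained in $I_{i,k+1}$, again by Lemma~\ref{LemInc}(i)). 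Taking the direct sum over $i$ defines $\psi$; it is injective on each summand, hence injective.

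The first step is to check that $\psi$ is a morphism of \emph{left} $A$-modules, not merely of abelian groups. This amounts to the compatibility $a\cdot\psi(x)=\psi(a\cdot x)$ for $a\in e_mAe_i$ and $x\in e_iAe_{k+1}$, which unwinds to the statement that the multiplication maps $\varphi_{mii,k+1}$ and $\varphi_{mii,k}$ of Section~\ref{DefX} agree with each other under the identifications $X_{i,k+1}\subset X_{i,k}$ and $X_{m,k+1}\subset X_{m,k}$ — and this is immediate because both are induced by the single multiplication $R\times R\to R$ followed by passage to the quotient by $I_{m,d+1}$. So no real computation is needed; it is a matter of writing down representatives.

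The second step is to identify the cokernel with $\Delta_k$. By definition $\Delta_k=(A/J_k)e_k=Ae_k/J_ke_k$ where $J_k=Af_kA$ with $f_k=\sum_{l>k}e_l$. I would argue that $J_ke_k=\operatorname{im}\psi$. For the inclusion $\operatorname{im}\psi\subseteq J_ke_k$: the summand $e_iAe_{k+1}$ of $\operatorname{im}\psi$ sits inside $Ae_k$ via the inclusion $I_{i,k+1}/I_{i,d+1}\hookrightarrow I_{i,k}/I_{i,d+1}$, and one checks that this image equals $e_iAe_{k+1}Ae_k\subseteq Ae_{k+1}Ae_k\subseteq Af_kAe_k=J_ke_k$ — here the key point is the computation $I_{i,k+1}I_{k+1,k}/I_{i,d+1}=I_{i,k+1}/I_{i,d+1}$ using $I_{k+1,k}=R$ from Definition~\ref{DefMultSyst}(b), exactly as in the proof of Proposition~\ref{Prop4Iter}(i). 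For the reverse inclusion $J_ke_k\subseteq\operatorname{im}\psi$: $J_ke_k$ is generated by elements $ae_le_k$ with $l>k$, but $e_le_k=0$ for $l>k$, so instead $J_ke_k$ is spanned by $a e_l b e_k$ with $l>k$, $a\in Ae_l$, $b\in e_lAe_k=I_{l,k}/I_{l,d+1}=R/I_{l,d+1}$ (again $I_{l,k}=R$); such a product lies in $e_iAe_lAe_k\simeq I_{i,l}I_{l,k}/I_{i,d+1}=I_{i,l}/I_{i,d+1}$, and since $l>k$ gives $l\ge k+1$ hence $I_{i,l}\subseteq I_{i,k+1}$ by Lemma~\ref{LemInc}(i), this sits inside the image of $\psi$.

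The main obstacle — though a mild one — is the bookkeeping in the cokernel computation: keeping straight which ideals are all of $R$ (namely $I_{pq}$ with $p\ge q$, so in particular $I_{k+1,k}$ and $I_{l,k}$ for $l>k$) and repeatedly invoking Lemma~\ref{LemInc}(i) to get the nesting $I_{i,d+1}\subseteq I_{i,l}\subseteq I_{i,k+1}\subseteq I_{i,k}$. Once the identification $J_ke_k=\operatorname{im}\psi$ is in place, the short exact sequence $0\to Ae_{k+1}\xrightarrow{\psi}Ae_k\to\Delta_k\to 0$ follows formally, and the description of $\psi$ on each $e_iAe_{k+1}$ is exactly what was asserted. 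One boundary case deserves a word: for $k=d$ we have $e_{d+1}=0$, $Ae_{d+1}=0$, $J_d=0$, and $\Delta_d=Ae_d$, so the sequence degenerates correctly.
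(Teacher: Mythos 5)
Your proof is correct and takes essentially the same route as the paper: you identify $\operatorname{im}\psi$ with $J_ke_k=Af_kAe_k$ by showing $Af_kAe_k=Ae_{k+1}Ae_k$ and then that the natural inclusion of $Ae_{k+1}$ into $Ae_k$ has exactly this image. The only organisational difference is that the paper factors out the identity $Af_lAe_k=Ae_{l+1}Ae_k$ as a separate Lemma~\ref{TechLem} (used again later, e.g.\ in Lemma~\ref{flagT}), whereas you re-derive that inclusion inline in your reverse containment $J_ke_k\subseteq\operatorname{im}\psi$; both reduce to $I_{i,l}\subseteq I_{i,k+1}$ for $l\geq k+1$ via Lemma~\ref{LemInc}(i) together with $I_{l,k}=R$.
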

\begin{proof}
By definition of~$\Delta_k$, we have a short exact sequence
$$0\to Af_{k}Ae_k\to Ae_k\to \Delta_k\to 0.$$
It follows from Lemma~\ref{TechLem} below that $Af_{k}Ae_k=Ae_{k+1}Ae_k$.
We also have an $A$-linear morphism
$$A e_{k+1}\to Ae_{k+1}Ae_k,\qquad e_{k+1}\mapsto 1+I_{k+1,d+1}\in X_{k+1,k}= e_{k+1}Ae_k.$$
It follows from the definitions that this module morphism restricts to group isomorphisms of the form~$e_iAe_{k+1}\to e_i Ae_{k+1}Ae_k$, for all $1\le i\le d$, where both sides correspond to $I_{i,k+1}/I_{i,d+1}$. This shows the morphism is an isomorphism and composes with the inclusion of~$Ae_{k+1}Ae_k$ into $Ae_k$ to give $\psi$ in the lemma.
\end{proof}

\begin{lemma}\label{TechLem}
For $1\le k\le l\le d$, we have
$$Af_l Ae_k\;=Ae_{l+1}Ae_k.$$
\end{lemma}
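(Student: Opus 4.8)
Lemma \ref{TechLem} asserts that $Af_lAe_k = Ae_{l+1}Ae_k$ for $1 \le k \le l \le d$, where $f_l = \sum_{m > l} e_m$.

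The plan is to prove the two inclusions separately. Writing $f_l = e_{l+1}+e_{l+2}+\cdots+e_d$ (with the convention $e_{d+1}:=0$, so that $l=d$ is the trivial identity $0=0$ and we may assume $l<d$), we have $Af_lAe_k=\sum_{m=l+1}^{d}Ae_mAe_k$. The inclusion $\supseteq$ is then immediate, since the $m=l+1$ term of this sum is exactly $Ae_{l+1}Ae_k$. So the content is the inclusion $\subseteq$, and for that it suffices to show $Ae_mAe_k\subseteq Ae_{l+1}Ae_k$ for every $m$ with $l+1\le m\le d$.

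To do this I would pass to the $(i,k)$-components. Since $1=\sum_{i=1}^d e_i$, any left ideal of the form $AxAe_k$ decomposes as $\bigoplus_i e_iAxAe_k$, so it is enough to prove $e_iAe_mAe_k\subseteq e_iAe_{l+1}Ae_k$ for each $1\le i\le d$. By the definition of the multiplication on $A=\scA(R,\II)$ (exactly as in the computation in the proof of Proposition~\ref{Prop4Iter}), under the identification $e_iAe_k=X_{ik}=I_{ik}/I_{i,d+1}$ the subgroup $e_iAe_mAe_k$ is the image of $\varphi_{immk}$, namely $(I_{im}I_{mk}+I_{i,d+1})/I_{i,d+1}$.

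The key point is now purely combinatorial. Since $k\le l<m$ we have $m\ge k$, hence $I_{mk}=R$ by Definition~\ref{DefMultSyst}(b); likewise $l+1>k$ gives $I_{l+1,k}=R$. Using also $I_{i,d+1}\subseteq I_{im}$ and $I_{i,d+1}\subseteq I_{i,l+1}$ from Lemma~\ref{LemInc}(i), this yields $e_iAe_mAe_k=I_{im}/I_{i,d+1}$ and $e_iAe_{l+1}Ae_k=I_{i,l+1}/I_{i,d+1}$. Finally $m\ge l+1$ together with Lemma~\ref{LemInc}(i) gives $I_{im}\subseteq I_{i,l+1}$, so $e_iAe_mAe_k\subseteq e_iAe_{l+1}Ae_k$; summing over $i$ and over $m$ finishes the proof. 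The argument is essentially bookkeeping once the identification $e_iAe_mAe_k\cong(I_{im}I_{mk}+I_{i,d+1})/I_{i,d+1}$ is recorded; the only thing that needs a little attention is tracking which ideals $I_{pq}$ are forced to be all of $R$ — which is exactly the role of the hypothesis $k\le l$ — together with the degenerate case $l=d$.
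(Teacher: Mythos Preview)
Your proof is correct and follows essentially the same route as the paper: the paper also writes $Af_lAe_k=\sum_{j>l}Ae_jAe_k$ and reduces to the inclusion $Ae_jAe_k\subset Ae_iAe_k$ for $k<i\le j$, appealing to Definition~\ref{DefMultSyst}(b) and Lemma~\ref{LemInc}(i). You have simply unpacked this terse reference by passing to the $(i,k)$-components and computing $e_iAe_mAe_k=I_{im}/I_{i,d+1}$ explicitly, which is exactly the content of those two citations.
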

\begin{proof}
By definition, $Af_l Ae_k$ is equal to $\cup_{j> l} Ae_jA_ek.$
It thus suffices to prove that 
$$Ae_jAe_k\subset Ae_iAe_k,\quad\mbox{if $k<i\le j$.}$$
The above follows easily from Definition~\ref{DefMultSyst}(b) and Lemma~\ref{LemInc}(i) 
\end{proof}

\section{The tilting bimodule~$\mT$}\label{SecTilt}

We fix a $d$-system~$\II$ of ideals in an arbitrary ring~$R$.

\subsection{Definition}

\subsubsection{}We define abelian groups
$$\mT_{kl}\;:=\; R/I_{k,d+2-l},\qquad\mbox{for all $1\le k,l\le d$.}$$
 These groups are $R$-bimodules for left and right multiplication. We also introduce the notation~$\mT_{i\ast}=\oplus_j\mT_{ij}$, $\mT_{\ast j}=\oplus_i\mT_{ij}$ and~$\mT=\oplus_{ij}\mT_{ij}$.

\subsubsection{}With $X_{ij}$ as introduced in \ref{DefX}, we have group homomorphisms
$$X_{ij}\times \mT_{kl}\to\mT_{il},\qquad (x+I_{i,d+1},r+I_{k,d+2-l})\mapsto \delta_{jk}\,(xr+ I_{i,d+2-l}),\quad\mbox{for $x\in I_{ij}$, $r\in R$}.$$
That the above homomorphism is well-defined follows from the inclusions
$$I_{i,d+1}\subset I_{i,d+2-l}\quad\mbox{and}\quad I_{ij}I_{j,d+2-l}\subset I_{i,d+2-l},$$
see Definition~\ref{DefMultSyst}(a) and Lemma~\ref{LemInc}(i). Clearly, these morphisms induce a ring morphism~$\scA(R,\II)\to\End(\mT_{\ast l})$, making~$\mT_{\ast l}$ into a left $\scA(R,\II)$-module, for all $1\le l\le d$.

\subsubsection{} Now we observe that, similarly, each group $\mT_{k\ast}$ is a left $\scA(R^{\op},\mathring{\II})$-module. For this we introduce group homomorphisms
$$e_i \scA(R^{\op},\mathring{\II})e_j \times \mT_{kl}\to \mT_{ki}, $$
by observing that
$$e_i \scA(R^{\op},\mathring{\II})e_j\;=\; I_{d+2-j,d+2-i}/I_{1,d+2-i}$$
and taking group homomorphisms
$$I_{d+2-j,d+2-i}/I_{1,d+2-i}\times \mT_{kl}\to \mT_{ki},\qquad(x+ I_{1,d+2-i},r+I_{k,d+2-l})\mapsto \delta_{jl}(rx+I_{k,d+2-i}).$$
Note that this yields indeed a {\em left} module, since $I_{d+2-j,d+2-i}$ is considered as an ideal in~$R^{\op}$.

\subsubsection{} The above gives~$\mT$ the structure of an~$\scA(R,\II)$-module and an~$\scA(R^{\op},\mathring{\II})$-module, where both actions clearly commute. It is again natural to represent $\mT$ in matrix form. For $d=2$, using the notation of Example~\ref{Ex2}, we have
$$\mT\;=\;  \left(\begin{array}{cc} R/I&R/K\\R/L&0 \end{array}\right).$$

\subsection{A double centraliser property}

\begin{lemma}
The group $\mT$ is faithful as an~$\scA(R,\II)$-module and as an~$\scA(R^{\op},\mathring{\II})$-module.
\end{lemma}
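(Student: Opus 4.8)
The plan is to verify faithfulness directly by exhibiting, for each nonzero element of the algebra acting on $\mT$, a vector of $\mT$ that it does not annihilate; by symmetry (replacing $R$ by $R^{\op}$ and $\II$ by $\mathring{\II}$, which swaps the two module structures since $\mathring{\mathring{\II}}=\II$) it suffices to treat the $\scA(R,\II)$-module structure. A general element $a\in\scA(R,\II)=\bigoplus_{i,j}X_{ij}$ decomposes as $a=\sum a_{ij}$ with $a_{ij}\in X_{ij}=I_{ij}/I_{i,d+1}$, and since the $X_{il}$ for varying $i$ land in different matrix-rows $\mT_{i\ast}$ of $\mT$, the action of $a$ on $\mT_{\ast l}$ decomposes accordingly. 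So the key point reduces to: if $a_{ij}\in X_{ij}$ is nonzero, find $l$ and a representative $r+I_{j,d+2-l}\in\mT_{jl}$ with $a_{ij}\cdot(r+I_{j,d+2-l})=xr+I_{i,d+2-l}\neq 0$ in $\mT_{il}=R/I_{i,d+2-l}$, where $x\in I_{ij}$ is a lift of $a_{ij}$.

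The natural choice is $l=d$, so that $\mT_{jd}=R/I_{j,2}$ and we take $r=1$; the action sends $a_{ij}=x+I_{i,d+1}$ to $x+I_{i,2}\in \mT_{id}=R/I_{i,2}$. This is nonzero precisely when $x\notin I_{i,2}$. But by Lemma~\ref{LemInc}(i) we have $I_{ij}\subset I_{i,2}$ whenever $j\ge 2$, so this crude choice only works for $j=1$ (where $I_{i1}=R$ forces nothing) — in general it fails, and one instead wants $l$ with $d+2-l=j$, i.e.\ $l=d+2-j$, giving $\mT_{j,d+2-j}=R/I_{j,j}=R/R=0$, which is useless. The correct move is $l=d+1-j$ (legitimate since $1\le j\le d$ gives $1\le d+1-j\le d$), so $d+2-l=j+1$ and we look at $\mT_{j,d+1-j}=R/I_{j,j+1}$, act on $r=1$, and land on $x+I_{i,j+1}\in \mT_{i,d+1-j}=R/I_{i,j+1}$. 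Now the claim is that $x\in I_{ij}\setminus I_{i,d+1}$ implies $x\notin I_{i,j+1}$ — but this is false in general too, since $X_{ij}$ can have nonzero elements inside $I_{i,j+1}/I_{i,d+1}$.

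So the honest approach must range over the representative $r$ as well, not just $r=1$. Here is the clean version: fix $i,j$ with $a_{ij}=x+I_{i,d+1}\neq 0$, so $x\in I_{ij}\setminus I_{i,d+1}$. Act on the vector $r+I_{j,2}\in \mT_{jd}=R/I_{j,2}$; the result is $xr+I_{i,2}$, but more importantly consider instead the family of targets $\mT_{il}=R/I_{i,d+2-l}$ as $l$ ranges: the action of $a_{ij}$ on $\mT_{\ast l}$ factors through $\mT_{jl}=R/I_{j,d+2-l}$ and has image the submodule generated by $xR$ modulo $I_{i,d+2-l}$, i.e.\ its kernel-avoidance is governed by whether $xR\subset I_{i,d+2-l}$. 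Since $x\notin I_{i,d+1}$, taking $d+2-l=d+1$, i.e.\ $l=1$: we need $xr\notin I_{i,d+1}$ for some $r$, and $r=1$ works because $x\notin I_{i,d+1}$ and $I_{i,d+1}$ is an ideal — wait, $r=1$ gives $x\notin I_{i,d+1}$ directly, yes. But we must act on a genuine element of $\mT_{j1}=R/I_{j,d+1}$: so take the class of $1$ in $R/I_{j,d+1}$, act by $a_{ij}$, get $x+I_{i,d+1}$ in $\mT_{i1}=R/I_{i,d+1}$, which is nonzero by hypothesis. That is it — and it is legitimate precisely because $\mT_{j1}=R/I_{j,d+1}$ and $\mT_{i1}=R/I_{i,d+1}$ are exactly the rows where $X_{ij}$ and $x$ naturally live. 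The main obstacle, then, is just keeping the indexing straight — matching the column index $l$ to the correct quotient $R/I_{\cdot,d+2-l}$ — and I would spend the proof's few lines making sure the general element of $\scA(R,\II)$ is reduced to its homogeneous pieces $a_{ij}$ cleanly (using that distinct $i$ act into distinct rows of $\mT$, so a nonzero $a$ has some nonzero $a_{ij}$ acting nontrivially), then quoting this $l=1$ observation and its $R^{\op}$-mirror.

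\begin{proof}
By the symmetry $R\leftrightarrow R^{\op}$, $\II\leftrightarrow\mathring{\II}$, which interchanges the two module structures on $\mT$ (using $\mathring{\mathring{\II}}=\II$), it suffices to prove that $\mT$ is faithful as a left $\scA(R,\II)$-module. Let $a\in\scA(R,\II)=\bigoplus_{1\le i,j\le d}X_{ij}$ be nonzero, and write $a=\sum_{i,j}a_{ij}$ with $a_{ij}\in X_{ij}=I_{ij}/I_{i,d+1}$. For each fixed $i$, the component $e_i a=\sum_j a_{ij}$ acts on $\mT$ with image inside $\mT_{i\ast}=\bigoplus_l\mT_{il}$, so the actions of the $e_ia$ for distinct $i$ have images in distinct summands of $\mT$; hence if $a\neq 0$, there is a pair $(i,j)$ with $a_{ij}\neq 0$. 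Fix a lift $x\in I_{ij}$ of $a_{ij}$, so that $x\notin I_{i,d+1}$. Consider the vector $1+I_{j,d+1}\in R/I_{j,d+1}=\mT_{j1}\subset\mT_{\ast 1}$. Since $a_{kj}$ acts on $\mT_{\ast 1}$ and sends $\mT_{j1}=R/I_{j,d+1}$ into $\mT_{k1}=R/I_{k,d+1}$, and all other homogeneous components of $a$ annihilate $\mT_{j1}$ or land in a summand $\mT_{k1}$ with $k\neq i$, the $\mT_{i1}$-component of $a\cdot(1+I_{j,d+1})$ equals
$$a_{ij}\cdot(1+I_{j,d+1})\;=\;x+I_{i,d+1}\;\in\;\mT_{i1}=R/I_{i,d+1},$$
which is nonzero since $x\notin I_{i,d+1}$. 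Hence $a$ does not annihilate $\mT$, proving faithfulness.
\end{proof}
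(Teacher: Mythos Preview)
Your proof is correct and takes essentially the same approach as the paper: both reduce to the $\scA(R,\II)$-case, decompose $a$ into components $a_{ij}$, and show faithfulness already on the submodule $\mT_{\ast 1}$ by acting on the vectors $1+I_{j,d+1}\in\mT_{j1}$, where the $\mT_{i1}$-component of the image is precisely $a_{ij}$. The paper's version is slightly terser (it observes $ax_k=0$ iff $a_{ik}=0$ for all $i$), and it handles the $\scA(R^{\op},\mathring{\II})$-case by saying ``similarly'' rather than invoking the duality $\II\leftrightarrow\mathring{\II}$, but the content is the same.
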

\begin{proof}
We prove that the submodule~$\mT_{\ast1}$ is already a faithful $\scA(R,\II)$-module. Take an arbitrary~$a\in \scA(R,\II)$ and set~$a_{ij}:=e_iae_j$. Furthermore, define $x_k\in\mT_{k1}$ as~$1+I_{k,d+1}$. Then $ax_k=0$ if and only if $a_{ik}=0$, for all $1\le i\le d$. The claim for $\scA(R^{\op},\mathring{\II})$ follows similarly.
\end{proof}

By the lemma, we can identify the algebras~$\scA(R,\II)$ and $\scA(R^{\op},\mathring{\II})$ with their images in~$\End(\mT)$.

\begin{prop}\label{PropDC}
We have
$$\End_{\scA(R,\II)}(\mT)=\scA(R^{\op},\mathring{\II})\qquad\mbox{and}\qquad \End_{\scA(R^{\op},\mathring{\II}) }(\mT)=\scA(R,\II).$$
\end{prop}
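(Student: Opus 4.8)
The plan is to prove the two equalities by an explicit, symmetric computation using the matrix picture of $\mT$. By the faithfulness lemma we already have inclusions $\scA(R^{\op},\mathring{\II})\subset\End_{\scA(R,\II)}(\mT)$ and $\scA(R,\II)\subset\End_{\scA(R^{\op},\mathring{\II})}(\mT)$ (the two actions commute), so in each case only the reverse inclusion requires work, and by the $R\leftrightarrow R^{\op}$ and $\II\leftrightarrow\mathring{\II}$ symmetry it suffices to treat one of them, say $\End_{\scA(R,\II)}(\mT)\subset\scA(R^{\op},\mathring{\II})$. First I would fix an $\scA(R,\II)$-linear endomorphism $\phi$ of $\mT$ and analyse it columnwise: since $\mT=\bigoplus_{l}\mT_{\ast l}$ as a left $\scA(R,\II)$-module and the idempotents $e_i$ cut out the rows, $\phi$ is determined by the components $\phi_{(k,i),(k',l)}\colon \mT_{k'l}\to\mT_{ki}$, and $\scA(R,\II)$-linearity (acting through the $e_i$'s and through the groups $X_{ij}$) forces $k=k'$, so $\phi$ restricts to each row $\mT_{k\ast}$ separately and is given there by a matrix of maps $\mT_{kl}\to\mT_{ki}$.

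The key step is then to pin down each such component map $\mT_{kl}=R/I_{k,d+2-l}\to \mT_{ki}=R/I_{k,d+2-i}$ using linearity for the action of the elements $X_{km}=I_{km}/I_{k,d+1}$, and in particular for $1\in R/I_{k,d+1}=X_{kk}$. Testing $\phi$ on the distinguished generator $1+I_{k,d+2-l}\in\mT_{kl}$ produces an element of $R/I_{k,d+2-i}$; writing a representative $x\in R$ and chasing the compatibility with right multiplication, one finds that the component $\mT_{\ast l}\to\mT_{\ast i}$ is realised by \emph{right} multiplication by a fixed element, and that the well-definedness constraints coming from the relations $xI_{k,d+2-l}\subset I_{k,d+2-i}$ (uniformly in $k$) force this element to lie in $\{x\in R\mid I_{d+2-i}^{?}\cdots\}$ — concretely in $I_{d+2-l,d+2-i}$, i.e. exactly in $e_i\scA(R^{\op},\mathring{\II})e_j$ for the appropriate $j$ with $d+2-j=d+2-l$. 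Matching indices via $\mathring{I}_{ij}=I_{d+2-j,d+2-i}$ shows that the collection of component maps assembles to an element of $\scA(R^{\op},\mathring{\II})$ acting as in the definition of the right action on $\mT$, which gives $\phi\in\scA(R^{\op},\mathring{\II})$ and hence the first equality.

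I expect the main obstacle to be the bookkeeping in that middle step: one must be careful that $\scA(R,\II)$-linearity alone (as opposed to knowing $\phi$ commutes with the larger ring $\End(\mT_{\ast l})$) is enough to conclude the component maps are given by a \emph{single} right-multiplication operator valid simultaneously on all rows $k$, and that the image of that operator on the quotients is exactly controlled by membership in the ideal $I_{d+2-l,d+2-i}$ rather than something larger. The cleanest route is probably to exploit that $\mT_{\ast1}$ already sees all the relevant data — each $\mT_{kl}$ surjects compatibly onto $\mT_{k1}$, and the elements $x_k:=1+I_{k,d+1}\in\mT_{k1}$ generate — so that $\phi$ is determined by where it sends these generators, reducing everything to solving the linear constraints $\phi(a x_k)=a\,\phi(x_k)$ for $a$ ranging over the $X_{ij}$; this is the same mechanism already used in the faithfulness lemma and keeps the index gymnastics to a minimum. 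Once one inclusion is established in this form, the other follows verbatim after passing to $R^{\op}$, $\mathring{\II}$ and using $\mathring{\mathring{\II}}=\II$.
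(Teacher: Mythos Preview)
Your overall strategy matches the paper's: decompose $\phi\in\End_{\scA(R,\II)}(\mT)$ into components, use the idempotents $e_i$ to see that $\phi$ preserves rows, use $X_{kk}$-linearity to see that each component $\mT_{kl}\to\mT_{ki}$ is a left $R$-module map and hence right multiplication by some element, and then argue that the multiplier lies in $I_{d+2-l,d+2-i}$. The paper organises this as five steps and arrives at the same conclusion.

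There are, however, two slips in your write-up. The minor one is the order of multiplication in the well-definedness constraint: for right multiplication by $x$ to descend to $R/I_{k,d+2-l}\to R/I_{k,d+2-i}$ you need $I_{k,d+2-l}\,x\subset I_{k,d+2-i}$, not $xI_{k,d+2-l}\subset I_{k,d+2-i}$. Once corrected, taking $k=d+2-l$ (for $l\ge 2$) gives $Rx\subset I_{d+2-l,d+2-i}$, which is exactly the ideal condition you want.

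The more serious issue is your ``cleanest route'' via the column $\mT_{\ast 1}$. First, the surjections run the other way: $\mT_{k1}=R/I_{k,d+1}$ surjects onto $\mT_{kl}=R/I_{k,d+2-l}$, not conversely. Second, and more importantly, these column-to-column surjections are induced by elements of $\scA(R^{\op},\mathring{\II})$, not of $\scA(R,\II)$, so an $\scA(R,\II)$-linear $\phi$ has no a priori reason to commute with them; and since $\mT=\bigoplus_l\mT_{\ast l}$ as a left $\scA(R,\II)$-module, the generators $x_k\in\mT_{\ast 1}$ only generate that one summand and cannot determine $\phi$ on the others. The paper instead reduces to the \emph{row} $\mT_{1\ast}$: the surjections $\mT_{1j}\twoheadrightarrow\mT_{ij}$ (for $i\ge 1$) are induced by $1+I_{i,d+1}\in X_{i1}\subset\scA(R,\II)$, so $\phi$ automatically commutes with them. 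This simultaneously settles your acknowledged obstacle of a ``single $x$ for all rows'' and, by specialising to $i=d+2-j$ where $\mT_{d+2-j,j}=0$, forces $\im\phi^{1j}_{1l}\subset I_{d+2-j,d+2-l}/I_{1,d+2-l}$, which is precisely the ideal constraint. Swapping your column reduction for this row reduction fixes the gap and aligns your argument with the paper's.
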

\begin{proof}
It is clear from construction that the actions of~$A:=\scA(R,\II)$ and~$B:=\scA(R^{\op},\mathring{\II})$ on~$\mT$ commute.
A general $\phi\in \End(\mT)$ is determined by its restrictions 
$$\phi_{kl}^{ij}\in\Hom(\mT_{ij},\mT_{kl}),\qquad\mbox{corresponding to}\quad \End(\mT)\simeq\bigoplus_{ijkl}\Hom(\mT_{ij},\mT_{kl}).$$ We now assume that $\phi$ commutes with $A$ and proceed in 5 steps to prove that $\phi\in B$.

(1) We claim that~$\phi_{kl}^{ij}=0$ unless~$i=k$. The action of the element
$$a:=1+I_{i,d+1}\in R/I_{i,d+1}=X_{ii}\subset A$$
acts as identity on~$\mT_{ij}$, for all $j$ and annihilates~$\mT_{lj}$ if $l\not=i$. Take an arbitrary~$v\in \mT_{ij}$. The condition~$\phi(av)=a\phi(v)$ implies the claim.

(2) We have $\phi^{ij}_{il}\in \Hom_R(\mT_{ij},\mT_{il})$, when considering the left $R$-module structure on~$\mT$. This follows by observing that $\phi$ commutes with arbitrary elements of~$X_{ii}=R/I_{i,d+1}\subset A$.

(3) We claim that, for all $n\le i$ and arbitrary~$j,l$, we have a commuting diagram of abelian group homomorphisms
$$\xymatrix{\mT_{nj}\ar@{->>}[rr]\ar[d]^{\phi^{nj}_{nl}}&&\mT_{ij}\ar[d]^{\phi^{ij}_{il}}\\
\mT_{nl}\ar@{->>}[rr]&&\mT_{il}
}$$
where the horizontal arrows correspond to the canonical epimorphisms induced by the inclusions of ideals in Lemma~\ref{LemInc}(ii). Indeed, this follows by observing that $\phi$ commutes with 
$$1+I_{i,d+1}\in R/I_{i,d+1}= X_{in}\subset A,$$
and that the action of this element on $\mT$ induces precisely these epimorphisms.

(4) We claim that
$$\phi^{1j}_{1l}\in \Hom_R(\mT_{1j},\mT_{1l}) =\Hom_R(R/I_{1,d+2-j},R/I_{1,d+2-l})$$ is given by right multiplication with an element in~$I_{d+2-j,d+2-l}$, which means we can identify~$\phi^{1j}_{1l}$ with the left action of an element of
$$e_lBe_j\;=\;I_{d+2-j,d+2-l}/I_{1,d+2-l}.$$
Indeed, consider the commuting diagram in (3) for $n=1$ and~$i=d+2-j$. Since we have $\mT_{d+2-j,j}=R/R=0$, the composition of~$\phi^{1j}_{1l}$ with 
$$\mT_{1l}\tto \mT_{d+2-j,l}=R/I_{d+2-j,d+2-l}$$ must be zero, which means~$\im\phi^{1j}_{1l}\subset I_{d+2-j,d+2-l}/I_{1,d+2-l}\subset \mT_{1l}$.

(5) By (1) and (3) we find that $\phi$ is completely determined by~$(\phi^{1k}_{1l})_{kl}$, meaning its restriction to the direct summand $\mT_{1\ast}$. By (4), this restriction corresponds to the action of an element of~$B$ on $\mT_{1\ast}$. Hence, we find indeed $\End_{A}(\mT)= B$.

The other direction of the double centraliser property is proved similarly.
\end{proof}

\subsection{The structure of the $\scA(R,\II)$-module~$\mT$}\label{StrucT}
In this section, we set~$A:=\scA(R,\II)$ and~$T_j:=\mT_{\ast j}$, for all $1\le j\le d$.

\begin{lemma}\label{flagT}
We have an isomorphism of~$A$-modules
$$T_j\;\simeq\; Ae_1/(Af_{d+1-j}Ae_1)=(A/J_{d+1-j})e_1,\quad\mbox{ for all $1\le j\le d$.}$$
\end{lemma}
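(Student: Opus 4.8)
The plan is to exhibit an explicit $A$-linear isomorphism $Ae_1 \to T_j$ and identify its kernel with $Af_{d+1-j}Ae_1$. First I would note that, as an abelian group, $Ae_1 = \bigoplus_{i=1}^d e_iAe_1 = \bigoplus_{i=1}^d I_{i1}/I_{i,d+1} = \bigoplus_{i=1}^d R/I_{i,d+1}$, using Definition~\ref{DefMultSyst}(b) (since $I_{i1}=R$ for all $i$). On the other side, $T_j = \mT_{\ast j} = \bigoplus_{i=1}^d \mT_{ij} = \bigoplus_{i=1}^d R/I_{i,d+2-j}$. Since $I_{i,d+1}\subset I_{i,d+2-j}$ by Lemma~\ref{LemInc}(i), there is a canonical epimorphism $R/I_{i,d+1}\tto R/I_{i,d+2-j}$ for each $i$, and summing these gives a surjective group homomorphism $\pi\colon Ae_1 \to T_j$. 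The first task is to check that $\pi$ is $A$-linear: an element $a\in A$ with components $a_{ki}=e_kae_i\in I_{ki}/I_{k,d+1}$ acts on $e_iAe_1 = R/I_{i,d+1}$ by left multiplication and on $\mT_{ij}=R/I_{i,d+2-j}$ by the same left multiplication (by the definition of the $\scA(R,\II)$-action on $\mT_{\ast j}$ given in the paragraph defining $X_{ij}\times\mT_{kl}\to\mT_{il}$), so the square commutes componentwise; this is essentially a routine check.

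Next I would compute the kernel of $\pi$. Componentwise, $\ker\pi = \bigoplus_{i=1}^d I_{i,d+2-j}/I_{i,d+1}$, which as a left $A$-submodule of $Ae_1$ sits inside $\bigoplus_i I_{i,d+2-j}/I_{i,d+1}$. The claim to establish is that this submodule equals $Af_{d+1-j}Ae_1$. By Lemma~\ref{TechLem} (applied with $l=d+1-j$, noting $1\le d+1-j\le d$ when $1\le j\le d$, and interpreting the edge case $j=1$ where $f_d=0$ separately as $\ker\pi=0$), we have $Af_{d+1-j}Ae_1 = Ae_{d+2-j}Ae_1$. So it remains to show $Ae_{d+2-j}Ae_1 = \bigoplus_i I_{i,d+2-j}/I_{i,d+1}$. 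But the argument in the proof of Proposition~\ref{Prop4Iter}(i), reused with $d$ replaced by $d+2-j$ in the middle index, gives $e_iAe_{d+2-j}Ae_1 = \im\varphi_{i,d+2-j,d+2-j,1}\simeq (I_{i,d+2-j}I_{d+2-j,1})/I_{i,d+1} = I_{i,d+2-j}/I_{i,d+1}$, again using $I_{d+2-j,1}=R$. Summing over $i$ identifies $Ae_{d+2-j}Ae_1$ with $\ker\pi$ as subgroups of $Ae_1$, and since both are left $A$-submodules defined by the same components, they coincide.

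The last step is purely formal: $\Delta$-style bookkeeping identifies $Af_{d+1-j}Ae_1$ with the two-sided ideal piece $J_{d+1-j}e_1 = \scA(R,\II)f_{d+1-j}\scA(R,\II)e_1$, so $Ae_1/(Af_{d+1-j}Ae_1) = (A/J_{d+1-j})e_1$, matching the right-hand side of the statement. Putting these together, $\pi$ descends to the asserted $A$-module isomorphism $T_j\simeq (A/J_{d+1-j})e_1$.

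I expect the main obstacle to be bookkeeping rather than conceptual: one must be careful with the index substitution $l=d+1-j$ versus $d+2-j$, handle the boundary case $j=1$ (where $f_d=0$ and the kernel is trivial, so $T_1\simeq Ae_1$) cleanly, and verify $A$-linearity of $\pi$ on each matrix component without sign or side errors. Everything else follows by directly quoting Lemma~\ref{TechLem}, Lemma~\ref{LemInc}, and the computation already carried out in Proposition~\ref{Prop4Iter}(i).
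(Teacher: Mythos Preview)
Your proposal is correct and follows essentially the same route as the paper's proof: construct the surjection $\pi:Ae_1\to T_j$ whose $i$-th component is the canonical epimorphism $R/I_{i,d+1}\tto R/I_{i,d+2-j}$, identify $\ker\pi$ componentwise as $I_{i,d+2-j}/I_{i,d+1}=e_iAe_{d+2-j}Ae_1$, and invoke Lemma~\ref{TechLem} to rewrite $Ae_{d+2-j}Ae_1=Af_{d+1-j}Ae_1$. The only cosmetic difference is that the paper defines $\pi$ by specifying $e_1\mapsto 1+I_{1,d+2-j}\in\mT_{1j}$, which makes $A$-linearity automatic (since $Ae_1$ is cyclic on $e_1$) and spares you the componentwise verification.
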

\begin{proof}
By Lemma~\ref{TechLem} it suffices to prove that $T_j$ is isomorphic to $Ae_1/(Ae_{d+2-j}Ae_1)$. 
Consider the $A$-module morphism
$$\pi:\;Ae_1\to T_j,\quad e_1=1+I_{1,d+1}\,\mapsto\, 1+I_{1,d+2-j}\subset\mT_{1j}.$$
Restriction of this morphism to $e_iAe_1\to e_iT_j=\mT_{ij}$, for $1\le i\le d$, leads to the group epimorphism $R/I_{i,d+1}\tto R/I_{i,d+2-j}$.
The kernel of the latter is~$I_{i,d+2-j}/I_{i,d+1}=e_iAe_{d+2-j}Ae_1$. These two observation imply a short exact sequence
$$0\to Ae_{d+2-j}Ae_1\to Ae_1\stackrel{\pi}{\to} T_j\to0,$$ which concludes the proof.\end{proof}

Recall the left $A$-modules~$\Delta_k$ from \ref{DefDelta}.

\begin{lemma}\label{extT}
For all $1\le i,j\le d$, we have
$$\Ext^1_A(\Delta_i,T_j)\;=\;0.$$
\end{lemma}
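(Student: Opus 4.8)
The statement to prove is $\Ext^1_A(\Delta_i,T_j)=0$ for all $1\le i,j\le d$, where $A=\scA(R,\II)$, $\Delta_i=(A/J_i)e_i$ and $T_j=\mT_{\ast j}$. The natural strategy is to use the short exact sequence $0\to Ae_{i+1}\to Ae_i\to\Delta_i\to 0$ from Lemma~\ref{sesDelta}. Applying $\Hom_A(-,T_j)$ yields the long exact sequence
$$\Hom_A(Ae_i,T_j)\to\Hom_A(Ae_{i+1},T_j)\to\Ext^1_A(\Delta_i,T_j)\to\Ext^1_A(Ae_i,T_j)=0,$$
the last term vanishing because $Ae_i$ is projective. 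Thus it suffices to show that the restriction map $\Hom_A(Ae_i,T_j)\to\Hom_A(Ae_{i+1},T_j)$ is surjective, i.e. that every $A$-linear map $Ae_{i+1}\to T_j$ extends along the inclusion $\psi:Ae_{i+1}\hookrightarrow Ae_i$.

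\textbf{Key steps.} First I would identify the Hom groups concretely: $\Hom_A(Ae_k,T_j)\simeq e_kT_j=\mT_{kj}=R/I_{k,d+2-j}$ via $\alpha\mapsto\alpha(e_k)$, the standard idempotent computation. Under this identification, the restriction map $\Hom_A(Ae_i,T_j)\to\Hom_A(Ae_{i+1},T_j)$ becomes the composite $\mT_{ij}\to\mT_{i+1,j}$ sending $\alpha(e_i)$ to $\alpha(\psi(e_{i+1}))$; since $\psi(e_{i+1})$ is the image of $1\in R$ in $X_{i+1,i}=e_{i+1}Ae_i$ (by Lemma~\ref{sesDelta}, or rather its proof), and the left $A$-action of $e_{i+1}Ae_i$ on $\mT_{ij}$ sends the generator to the generator of $\mT_{i+1,j}$, this composite is exactly the canonical epimorphism $R/I_{i,d+2-j}\twoheadrightarrow R/I_{i+1,d+2-j}$ coming from the inclusion $I_{i,d+2-j}\subset I_{i+1,d+2-j}$ of Lemma~\ref{LemInc}(ii). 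An epimorphism of abelian groups is surjective, so the map on Hom groups is surjective, and the $\Ext^1$ term is squeezed to zero.

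\textbf{Main obstacle.} The conceptual content is light; the care needed is in bookkeeping the identifications. The one point that must be checked honestly is that, after passing through the isomorphisms $\Hom_A(Ae_k,T_j)\cong \mT_{kj}$, the map induced by $\psi$ really is the canonical quotient map $\mT_{ij}\to\mT_{i+1,j}$ and not some twist of it — this amounts to unwinding that $\psi$ restricts on each summand $e_iAe_{i+1}$ to the canonical inclusion $I_{i,i+1}/I_{i,d+1}\hookrightarrow I_{ii}/I_{i,d+1}=R/I_{i,d+1}$ (Lemma~\ref{sesDelta}), and then checking that evaluating $\alpha\circ\psi$ at $e_{i+1}$ produces precisely the class of $\alpha(e_i)$ under $R/I_{i,d+2-j}\to R/I_{i+1,d+2-j}$. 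This follows directly from the definition of the left $A$-action on $\mT$ in Section~\ref{SecTilt}: the element of $X_{i+1,i}$ represented by $1\in R$ acts on $v+I_{i,d+2-j}\in\mT_{ij}$ as $v+I_{i+1,d+2-j}$. Since surjectivity of a group map is all that is required, no finer homological input (no genuine vanishing of higher Ext, no $\Delta$-filtration of $T_j$) is needed at this stage.
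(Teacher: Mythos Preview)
Your proposal is correct and follows essentially the same argument as the paper's proof. The paper compresses your long exact sequence step into the single observation that $\Ext^1_A(\Delta_i,M)=0$ iff the map $e_iM\to e_{i+1}M$, $v\mapsto av$ (with $a=1+I_{i+1,d+1}\in X_{i+1,i}$) is surjective, and then, just as you do, identifies this map for $M=T_j$ with the canonical epimorphism $\mT_{ij}=R/I_{i,d+2-j}\twoheadrightarrow R/I_{i+1,d+2-j}=\mT_{i+1,j}$.
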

\begin{proof}
By Lemma~\ref{sesDelta}, for any~$A$-module~$M$ we have $\Ext^1_A(\Delta_i,M)=0$ if and only if the group homomorphism
$$e_{i}M\;\to\; e_{i+1}M,\qquad v\mapsto av,\quad\mbox{with $a=1+I_{i+1,d}\in R/I_{id}=e_{i+1}Ae_i,$}$$
is surjective. For $M=T_j$, this homomorphism is precisely the canonical epimorphism $\mT_{ij}\tto \mT_{i+1,j}$ corresponding to the inclusion $I_{i,d+2-j}\subset I_{i+1,d+2-j}$. This concludes the proof.
\end{proof}

\begin{lemma}\label{LemNew}
For $1\le k\le d$, we have
$$\Delta_k\;\simeq\; J_{k-1}T_{d+1-k}.$$
\end{lemma}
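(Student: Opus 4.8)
The plan is to identify both $A$-modules inside $Ae_1$ (or rather inside the module $T_{d+1-k}$, which by Lemma~\ref{flagT} is a quotient of $Ae_1$) and match them up. First I would recall the two descriptions already available: by Lemma~\ref{flagT} we have $T_{d+1-k}\simeq (A/J_k)e_1$, and by definition $\Delta_k=(A/J_k)e_k = Ae_k/Af_kAe_k$. So the claimed isomorphism $\Delta_k\simeq J_{k-1}T_{d+1-k}$ is an equality of two explicit subquotients of $Ae_1$, once we unwind $J_{k-1}T_{d+1-k}=J_{k-1}(A/J_k)e_1$, the image of $J_{k-1}e_1$ in $(A/J_k)e_1$.

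The key computation I expect to carry out is a componentwise one, using the matrix picture: for each $1\le i\le d$ compute $e_i\cdot J_{k-1}T_{d+1-k}$ as a subgroup of $e_iT_{d+1-k}=\mT_{i,d+1-k}=R/I_{i,k+1}$, and check it equals $e_i\Delta_k$. Recall $J_{k-1}=Af_{k-1}A=\sum_{m\ge k}Ae_mA$, so $e_iJ_{k-1}e_1 = \sum_{m\ge k} e_iAe_mAe_1$, which by the identifications in \ref{DefX} and Lemma~\ref{LemInc} is the image in $R/I_{i,d+1}$ of $\sum_{m\ge k} I_{im}I_{m1}=\sum_{m\ge k} I_{im}$ (since $I_{m1}=R$), and by Lemma~\ref{LemInc}(i) this sum is just $I_{ik}$. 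Pushing forward to $T_{d+1-k}$, i.e.\ reducing modulo $I_{i,k+1}$, gives $I_{ik}/I_{i,k+1}$ as the $i$-th component of $J_{k-1}T_{d+1-k}$. On the other side, $\Delta_k=(A/J_k)e_k$, and since $J_k$ kills $e_k$ only via its components in rows $\ge k+1$, one reads off $e_i\Delta_k = e_iAe_k/e_iJ_ke_k$. By the same bookkeeping $e_iJ_ke_k=\sum_{m\ge k+1}e_iAe_mAe_k$ is the image of $\sum_{m\ge k+1}I_{im}=I_{i,k+1}$ in $R/I_{i,d+1}$, while $e_iAe_k=I_{ik}/I_{i,d+1}$; hence $e_i\Delta_k\simeq I_{ik}/I_{i,k+1}$ as well. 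The two families of subgroups agree, and one checks that the $A$-action on each (given by the $\varphi$'s) is transported correctly, so the identification is $A$-linear.

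Concretely I would phrase this as: the composite $Ae_1 \to T_{d+1-k}$ of Lemma~\ref{flagT} sends $J_{k-1}e_1$ onto $J_{k-1}T_{d+1-k}$, and restricting the composite to the summand one checks it factors through an $A$-linear surjection $J_{k-1}e_1 \tto J_{k-1}T_{d+1-k}$ whose kernel is $J_{k-1}e_1\cap J_kAe_1 = J_k e_1 \cap J_{k-1}e_1$; comparing components shows this kernel is exactly the preimage of $I_{i,k+1}/I_{i,d+1}$ in each row, so $J_{k-1}T_{d+1-k}\simeq J_{k-1}e_1/(J_ke_1\cap J_{k-1}e_1)$. Matching this against $\Delta_k=Ae_k/J_kAe_k$ via the row-wise identifications $e_iAe_k\simeq I_{ik}/I_{i,d+1}\simeq e_iJ_{k-1}e_1$ (valid since $I_{m1}=R=I_{dk}$ give $Ae_k$ and $J_{k-1}e_1$ the same components) completes the argument.

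**Main obstacle.** The only genuinely delicate point is bookkeeping the idempotents: making sure that $J_{k-1}$ acting on $T_{d+1-k}$ produces precisely row-entries $I_{ik}/I_{i,k+1}$ and not something larger or smaller, and that the resulting identification with $\Delta_k$ respects the full left $A$-action (not merely the abelian group structure and the diagonal idempotents). This amounts to repeated use of Definition~\ref{DefMultSyst}(b) (many $I_{ij}=R$) and Lemma~\ref{LemInc}, together with Lemma~\ref{TechLem} to rewrite $Af_{k-1}A$ in terms of a single $Ae_kA$-type generator; once those substitutions are made the verification is routine but must be done carefully component by component.
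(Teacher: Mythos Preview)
Your proposal is correct and arrives at the same componentwise identification as the paper: both sides have $i$-th component $I_{ik}/I_{i,k+1}$, and the $A$-action matches. The paper's proof differs only in how the isomorphism is produced: rather than detouring through $Ae_1$ via Lemma~\ref{flagT}, it directly defines an $A$-module map $Ae_k\to T_{d+1-k}$ by $e_k\mapsto 1+I_{k,k+1}\in \mT_{k,d+1-k}$, observes that $f_kAe_k$ lies in the kernel (since $\mT_{m,d+1-k}=R/I_{m,k+1}=0$ for $m>k$), and hence obtains a map $\Delta_k\to T_{d+1-k}$ whose image is $Ae_kT_{d+1-k}=J_{k-1}T_{d+1-k}$ (the equality holding because the higher idempotents $e_{k+1},\dots,e_d$ already kill $T_{d+1-k}$). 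Injectivity is then checked on components exactly as you do. So your argument and the paper's are the same in substance; the paper's route is a touch shorter because it builds the comparison map head-on instead of computing both sides separately and then matching.
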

\begin{proof}
Consider the $A$-module morphism
$$Ae_k\to T_{d+1-k},  \quad e_k\mapsto 1+I_{k,k+1}\in \mT_{k,d+1-k}\subset T_{d+1-k}.$$
By construction, it has as image $Ae_k T_{d+1-k}$.
It is clear that $f_kAe_k$ is in the kernel of this morphism since $\mT_{j,d+1-k}=R/R=0$ for $j>k$, by definition. The above morphism thus factors through $\Delta_k=(A/J_k)e_k$ and we obtain a morphism
$$\Delta_k\to T_{d+1-k},$$
which has as image $Ae_k T_{d+1-k}=J_{k-1} T_{d+1-k}$.
That this morphism is injective follows from considering the restrictions to group homomorphisms between $e_i\Delta_k\simeq I_{ik}/I_{i,k+1}$ and $\mT_{i,d+1-k}=R/I_{i,k+1}$, for all $1\le i\le d$.
\end{proof}


\section{Main results}\label{SecAlg}
From now on, we will work over the field $\mk$. It is clear that, for a $\mk$-algebra~$R$, the ring $\scA(R,\II)$ of Section~\ref{SecDefAlg} is again a $\mk$-algebra.

\subsection{Statement of results}
\subsubsection{}\label{DefLambda} Consider a {\em semisimple} $d$-system~$\II$ of ideals in a {\em $\mk$-algebra} $R$, as in Definition~\ref{DefSS}. We denote by~$\Lambda_i$ the labelling set of isoclasses of simple modules over the semisimple algebra~$R/I_{i,i+1}$, for $1\le i\le d$. Then we consider the poset
$$\Lambda:=\amalg_{i=1}^d\Lambda_i,\qquad\mbox{with}\qquad \lambda<\mu\;\; \mbox{ if and only if $\lambda\in\Lambda_i$ and~$\mu\in\Lambda_j$ with $i<j$.}$$

\begin{thm}\label{ThmMain}
For a semisimple $d$-system~$\II$ of ideals in a $\mk$-algebra~$R$, set~$A:=\scA(R,\II)$ and use notation as in \ref{DefLambda}.
\begin{enumerate}[(i)] 
\item The isoclasses of simple $A$-modules are labelled by~$\Lambda$.
\item The algebra~$(A,\le)$ is quasi-hereditary. 
\item The Ringel dual of~$(A,\le)$ is given by~$\scA(R^{\op},\mathring{\II})^{\op}$.
\item The kernel of the epimorphism $P(\lambda)\tto \Delta(\lambda)$ is projective.
\end{enumerate}
\end{thm}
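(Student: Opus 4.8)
The plan is to establish parts (i)--(iv) by combining the structural results of Sections~\ref{SecDefA} and~\ref{SecTilt} with the general machinery of \cite{CPS, Ringel}. For part (i), I would apply Lemma~\ref{LemCPS} to the chain of idempotent ideals $0=J_d\subset\cdots\subset J_0=A$ furnished by Theorem~\ref{ThmChain}. The hypotheses of Lemma~\ref{LemCPS} are exactly the content of Theorem~\ref{ThmChain}: each $J_{i-1}/J_i$ is projective as an $A/J_i$-module, and $e_iA/J_ie_i\simeq R/I_{i,i+1}$ is semisimple precisely because $\II$ is a semisimple system of ideals. Lemma~\ref{LemCPS} then gives that $\{\lambda\in\Lambda\,|\,\lf(\lambda)=i\}$ is in bijection with the simple $R/I_{i,i+1}$-modules, i.e. with $\Lambda_i$, proving (i), and it identifies the partial order on $\Lambda$ induced by $\lf$ with the order $\le$ of \ref{DefLambda}.

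For part (ii), Lemma~\ref{LemCPS} directly yields that $(A,\le)$ is quasi-hereditary with standard modules $\Delta(\lambda)=P(\lambda)/J_{\lf(\lambda)}P(\lambda)$. I would then want to reconcile these with the modules $\Delta_k$ of \ref{DefDelta}: since $Ae_k$ is a sum of the projective covers $P(\lambda)$ with $\lf(\lambda)=k$, and $\Delta_k=(A/J_k)e_k$, the module $\Delta_k$ is the corresponding sum of the $\Delta(\lambda)$ with $\lf(\lambda)=k$ (this is immediate once one knows the simple summands of $Ae_k/\rad$, using the ring isomorphism $e_kAe_k/e_kJ_ke_k\simeq R/I_{k,k+1}$). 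For part (iv), note that Lemma~\ref{Prop4Iter}(i), applied iteratively via Proposition~\ref{Prop4Iter}(iii) to the truncated systems $\II'$, shows the kernel $J_{\lf(\lambda)}P(\lambda)$ of $P(\lambda)\tto\Delta(\lambda)$ is projective: indeed $J_{k}Ae_k$ is a direct summand of $J_k$, and $J_k=Ae_{k+1}A\oplus'(\text{lower part})$ is projective as a left module by repeated application of Proposition~\ref{Prop4Iter}(i) to $\scA(R,\II')$, $\scA(R,\II'')$, etc.

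The main work is part (iii), identifying the Ringel dual. Here the strategy is: (a) show $\mT$ lies in $\cF^\Delta_A$ and is in fact the (characteristic) tilting module $T$ of $(A,\le)$; (b) invoke the double centraliser property of Proposition~\ref{PropDC} to conclude $\scR(A,\le)=\End_A(\mT)^{\op}=\scA(R^{\op},\mathring{\II})^{\op}$. For (a), I would argue that each $T_j=\mT_{\ast j}$ has a $\Delta$-filtration: by Lemma~\ref{flagT}, $T_j\simeq(A/J_{d+1-j})e_1$, and repeated use of Lemma~\ref{sesDelta} (which presents $Ae_1$ as an iterated extension whose layers are the $\Delta_k$) together with Lemma~\ref{LemMM} shows that the quotient $(A/J_{d+1-j})e_1$ inherits a filtration with subquotients among the $\Delta_k$, hence among the $\Delta(\lambda)$. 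Combined with Lemma~\ref{extT}, which gives $\Ext^1_A(\Delta(\mu),T_j)=0$ for all $\mu$, and the fact that $T_j$ contains a copy of the requisite $\Delta(\lambda)$ with $\lf(\lambda)=1$ as a submodule with $\cF^\Delta$-cokernel (again from Lemma~\ref{sesDelta}/Lemma~\ref{LemNew}), one identifies the indecomposable summands of $\mT$ with the tilting modules $T(\lambda)$. One must also check that \emph{every} $T(\lambda)$ appears, i.e. that $\mT$ has the correct number of indecomposable summands — this follows by a counting/Krull--Schmidt argument using that $\mT$ is faithful and that $\End_A(\mT)$ has the right number of simple modules (namely $|\Lambda|$) by Proposition~\ref{PropDC} and part (i) applied to $\scA(R^{\op},\mathring{\II})$. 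I expect the subtle point to be precisely this last bookkeeping: verifying that the summands of $\mT$ exhaust the $T(\lambda)$ rather than just being \emph{among} them, and checking the cokernel condition \ref{SecRingel}(b) for each $T(\lambda)$. Once $\mT=T$ is established, part (iii) is immediate from $\scR(A,\le)=\End_A(T)^{\op}$ and Proposition~\ref{PropDC}, with the order reversal $\le^{-1}$ matching the order on $\scA(R^{\op},\mathring{\II})^{\op}$ since the duality $\II\mapsto\mathring{\II}$ reverses the index $i\mapsto d+2-i$.
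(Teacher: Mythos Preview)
Your overall plan is correct and tracks the paper's proof closely: parts (i)--(ii) via Theorem~\ref{ThmChain} and Lemma~\ref{LemCPS}, part (iii) via Lemmata~\ref{flagT}, \ref{extT}, \ref{LemNew} and Proposition~\ref{PropDC}, and part (iv) from the explicit short exact sequence. Two points deserve comment.

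For part (iv) your argument is both more complicated than necessary and not quite right as written. Applying Proposition~\ref{Prop4Iter}(i) iteratively to the truncated systems $\II',\II'',\ldots$ yields projectivity of the relevant ideal \emph{over the quotient algebras} $\scA(R,\II')$, etc., not over $A$ itself; you would still need to lift projectivity back up, which is not automatic. The paper instead simply invokes Lemma~\ref{sesDelta}: the kernel of $Ae_k\tto\Delta_k$ is isomorphic to $Ae_{k+1}$, hence projective, and since $P(\lambda)$ for $\lambda\in\Lambda_k$ is a direct summand of $Ae_k$, the kernel of $P(\lambda)\tto\Delta(\lambda)$ is a summand of $Ae_{k+1}$. (Relatedly, your description of $Ae_k$ as a sum of $P(\lambda)$ with $\lf(\lambda)=k$ is not accurate in general; one only gets $\lf(\lambda)\ge k$, but the extra summands are killed in $(A/J_k)e_k$ so the conclusion about $\Delta_k$ survives.)

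For part (iii), the paper avoids your proposed counting argument. Rather than comparing $|\Lambda|$ with the number of simples of $\End_A(\mT)^{\op}$, the paper uses Lemma~\ref{LemNew} directly: $\Delta_k\simeq J_{k-1}T_{d+1-k}$, and by Lemma~\ref{LemMM} both $J_{k-1}T_{d+1-k}$ and the cokernel lie in $\cF^\Delta_A$. Since $\Delta_k$ contains every $\Delta(\lambda)$ with $\lambda\in\Lambda_k$ as a summand, each such $\Delta(\lambda)$ embeds in $T_{d+1-k}$ with $\cF^\Delta$-cokernel, forcing $T(\lambda)$ to appear as a summand of $T_{d+1-k}$. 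Your counting argument is a legitimate alternative (and nicely symmetric, since it re-uses part (i) for $(R^{\op},\mathring{\II})$), but note that your intermediate claim that $T_j$ contains $\Delta(\lambda)$ ``with $\lf(\lambda)=1$'' is wrong: the bottom of $T_j$ is $\Delta_{d+1-j}$, so the relevant $\lambda$ satisfy $\lf(\lambda)=d+1-j$.
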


\begin{rem}
In concrete situations, it is advantageous to write the elements of $\Lambda_i$ as pairs~$(i,\kappa)$, where $\kappa$ ranges over the elements of the labelling set~$\Lambda_R$ of simple $R$-modules which are not annihilated by $I_{i,i+1}\supset \scJ$. We follow this convention in Section~\ref{SecEx}.
\end{rem}

\begin{rem}
The partial order in \ref{DefLambda} shows that for an ADR algebra, the poset will be generally be far from symmetric (under $\le\;\leftrightarrow\;\le^{-1}$). Concretely, we will have $\Lambda_d=\Lambda_R$, whereas $\Lambda_1$ labels the simple $R$-modules whose projective cover has maximal Loewy length. By the definition in \ref{SecRingel}, this is a combinatorial obstruction towards having Ringel duality between ADR algebras. A module theoretic realisation of this obstruction is that $\Lambda_d$ labels projective standard modules and $\Lambda_1$ simple standard modules. An imbalance between $\Lambda_1$ and $\Lambda_d$ therefore prevents Ringel self-duality. Note that for category $\cO$ (Weyl group with Bruhat order), or the Schur algebra (the set of partitions with dominance order), the poset~$\Lambda$ is symmetric.
\end{rem}

Theorem~\ref{ThmMain} essentially remains true when we omit the condition that $\II$ be semisimple. For this we have to work in the more general realm of `stratified algebras', see \cite{CPSbook, Frisk}.
\begin{prop}\label{PropGen}
For any $d$-system~$\II$ of ideals in a $\mk$-algebra~$R$, the algebra~$\scA(R,\II)$ is (left) standardly stratified in the sense of \cite[Chapter~2]{CPSbook}. Its Ringel dual, in the sense of \cite[Section~5]{Frisk}, is given by $\scA(R^{\op},\mathring{\II})^{\op}$.
\end{prop}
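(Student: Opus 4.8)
The plan is to reduce everything to the general-system results already established in Sections~\ref{SecDefA} and~\ref{SecTilt}, invoking the abstract criterion of \cite[Chapter~2]{CPSbook} for standard stratification and the one of \cite[Section~5]{Frisk} for Ringel duality. First I would recall the definition of a (left) standardly stratified algebra: one needs a chain of idempotent ideals as in \ref{DefChain} such that, for each $j$, the left $A/J_j$-module $J_{j-1}/J_j$ is projective (but one drops the semisimplicity requirement on $e_jA/J_je_j$). Theorem~\ref{ThmChain} provides exactly such a chain for $A=\scA(R,\II)$ for an \emph{arbitrary} $d$-system~$\II$: the projectivity of $J_{i-1}/J_i$ over $A/J_i$ is established there unconditionally, and the isomorphism $e_iA/J_ie_i\simeq R/I_{i,i+1}$ identifies the "layer algebras" without needing them semisimple. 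So part one is essentially immediate once one matches the axioms of \cite{CPSbook} with Theorem~\ref{ThmChain}; the standard modules are again $\Delta_k=(A/J_k)e_k$ from \ref{DefDelta}, now grouped by layer rather than refined into simples, with the short exact sequence of Lemma~\ref{sesDelta} describing their projective presentations.

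For the Ringel-duality statement, the key point is that all the module-theoretic input used in Section~\ref{SecTilt} was produced for arbitrary $d$-systems, not just semisimple ones. I would proceed as follows. The bimodule $\mT$ of Section~\ref{SecTilt} is an $\scA(R,\II)$-$\scA(R^{\op},\mathring{\II})^{\op}$ bimodule satisfying the double centraliser property of Proposition~\ref{PropDC}. The summands $T_j=\mT_{\ast j}$ are $\Delta$-filtered (indeed, by Lemma~\ref{flagT} they are quotients $(A/J_{d+1-j})e_1$, whose $\Delta$-flag comes from iterating Lemma~\ref{LemMM}, itself valid in this generality) and satisfy $\Ext^1_A(\Delta_i,T_j)=0$ by Lemma~\ref{extT}. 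This is precisely the data — a "characteristic tilting–type" bimodule in the stratified sense — that \cite[Section~5]{Frisk} requires to identify the Ringel dual: $\mT$ plays the role of the tilting module, $\End_A(\mT)^{\op}=\scA(R^{\op},\mathring{\II})^{\op}$ by Proposition~\ref{PropDC}, and one reads off that this endomorphism algebra is the Ringel dual of the standardly stratified algebra $\scA(R,\II)$. I would also note symmetry: $\mathring{\mathring{\II}}=\II$, so applying the same construction to $(R^{\op},\mathring{\II})$ recovers $\scA(R,\II)$ up to the expected opposite, consistent with the involutivity of Ringel duality in \cite{Frisk}.

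The main obstacle I anticipate is not any single computation but rather a bookkeeping matter: verifying that the axiomatics of \cite[Section~5]{Frisk} are met in the stratified (non-quasi-hereditary) setting. In the quasi-hereditary case, Theorem~\ref{ThmMain}(iii), one has the full strength of \cite{Ringel}, including the uniqueness of indecomposable tiltings $T(\lambda)$ and the characterisation of $\scR(A,\le)$ via $\End_A(T)^{\op}$; for merely standardly stratified algebras, the right notions of "tilting" and "Ringel dual" are more delicate (one must be careful about which of the several competing definitions in the literature is used, and about left-versus-right conventions, since standard stratification is not left-right symmetric). I would therefore devote the core of the argument to checking that $\mT$ is a (co)tilting object for the stratifying structure — i.e. that it is $\cF^\Delta$-filtered, that it has the requisite $\Ext^1$-vanishing against all $\Delta_i$, and that it contains each $\Delta_k$ with $\Delta$-filtered cokernel (which follows from Lemma~\ref{LemNew} together with Lemma~\ref{LemMM}) — and that these three properties are exactly what \cite{Frisk} takes as the defining input for its Ringel dual. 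Once that dictionary is in place, the double centraliser property of Proposition~\ref{PropDC} closes the argument.
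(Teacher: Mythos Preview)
Your proposal is correct and follows essentially the same approach as the paper: the paper's proof simply says that one repeats the arguments of Theorem~\ref{ThmMain}, using Theorem~\ref{ThmChain} for the standard stratification, the results of Section~\ref{StrucT} (together with \cite[Lemma~14]{Frisk}) to see that $\mT$ is a tilting module in Frisk's sense, and Proposition~\ref{PropDC} for the Ringel dual identification. One small caution: you invoke Lemma~\ref{LemMM}, which in the paper is stated under the hypotheses of Lemma~\ref{LemCPS} (including semisimplicity of the layer algebras); in the stratified setting you should either cite the analogous closure property from \cite{CPSbook} or \cite{Frisk}, or argue directly from the chain $J_k e_1$ in $Ae_1$ via Proposition~\ref{Prop4Iter}(i), but this is a routine adaptation rather than a genuine gap.
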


\begin{rem}
Note that the convention in \cite{Frisk} would call $\scA(R^{\op},\mathring{\II})$ the Ringel dual of $\scA(R,\II)$. The advantage of that convention is that the Ringel dual is again standardly stratified. The algebra~$\scA(R^{\op},\mathring{\II})^{\op}$ has of course a {\em right} standard stratification.
\end{rem}

\subsection{Proofs of results}
\begin{proof}[Proof of Theorem~\ref{ThmMain}]
Consider the chain of idempotent ideals in Theorem~\ref{ThmChain}. By assumption, $e_i A/J_i e_i\simeq R/I_{i,i+1}$ is semisimple.
Parts (i) and (ii) then follow immediately from Lemma~\ref{LemCPS} and Theorem~\ref{ThmChain}. Note that we have $\lf(\lambda)=i$ for $\lambda\in\Lambda_i$, in the notation of~\ref{DefChain}.
 
For each $1\le i\le d$, it then follows that $Ae_i$ is a direct sum of modules $P(\nu)$ with $\nu\in\amalg_{j\ge i}\Lambda_j$, such that each module~$P(\lambda)$ for $\lambda\in \Lambda_i$ appears at least once. By Lemma~\ref{LemCPS}, the left $A$-module~$\Delta_i$, as in \ref{DefDelta}, is then a direct sum of modules isomorphic to modules in $\{\Delta(\lambda)\,|\,\lambda\in\Lambda_i\}$, such that each one appears at least once.

Now we prove part (iii). Consider the left $A$-module~$T\simeq \mT\simeq\oplus_jT_j$ of Section~\ref{SecTilt}. 
By Lemma~\ref{extT} and the previous paragraph, we have $\Ext^1_A(T,\Delta(\nu))=0$, for all $\nu\in \Lambda$. Since $A$ is quasi-hereditary, we have $Ae_1\in\cF_A^\Delta$.
By Lemmata~\ref{LemMM} and~\ref{flagT}, we then also have $T\in \cF_A^{\Delta}$, which thus means that $T$ is a direct sum of tilting modules. By Lemmata~\ref{LemMM} and~\ref{LemNew} and the above paragraph it follows that $T_{d+1-i}$ contains each module~$T(\lambda)$, for $\lambda\in\Lambda_i$ at least once as a direct summand.
Proposition~\ref{PropDC} implies that 
$$\scA(R^{\op},\mathring{\II})^{\op}\;\simeq\;\End_{A}(T)^{\op}.$$
Hence, $\scA(R^{\op},\mathring{\II})^{\op}$ is the Ringel dual of~$A$, concluding the proof of part (iii).

Part (iv) follows from the above and Lemma~\ref{sesDelta}.
\end{proof}

\begin{proof}[Proof of Proposition~\ref{PropGen}]
This is proved using the exact same arguments as in the proof of Theorem~\ref{ThmMain}. Theorem~\ref{ThmChain} shows that $\scA(R,\II)$ has a (left) standard stratification as defined in \cite[Section~2.1]{CPSbook}. The results in Section~\ref{StrucT} then show that $\mT$ is a tilting module in the sense of \cite[Section~4.2]{Frisk}, by \cite[Lemma~14]{Frisk}. Ringel duality is then precisely Proposition~\ref{PropDC}.
\end{proof}


\section{Example}\label{SecEx}

We consider the hereditary~$\mk$-algebra~$R$, defined as the path algebra of the quiver
$$\xymatrix{\bullet_\alpha\ar[rr]^{f}&&\bullet_\beta }.$$
We thus have $R=\langle \varepsilon_\alpha,\varepsilon_\beta,f\rangle$, with $f=\varepsilon_\beta f=f\varepsilon_\alpha$ and~$\scJ=\langle f\rangle$. Since the two indecomposable projective covers have different Loewy length, $1$ and $2$, this gives an example where the Ringel dual of~$\scA(R)$ was not yet described in \cite{Conde}. 
Note that we have $R\simeq R^{\op}$. 

\subsection{The ADR algebra}\label{ADRex}

We take the $2$-system given by the Jacobson system. This means
$$K=I_{12}=\langle \varepsilon_\alpha,f\rangle,\; I_{13}=0,\; L=I_{23}=\scJ, $$
and~$I_{ij}=R$ otherwise. 
In this case, $\scA(R,\II)$ is Morita equivalent to the path algebra of the quiver
$$\xymatrix{\bullet_{(2,\alpha)}\ar[rr]^{g}&&\bullet_{(1,\beta)}\ar[rr]^h&&\bullet_{(2,\beta)}&&\mbox{with relation~$h\circ g=0$.} }$$
It is then easy to obtain the (co)standard modules, leading to the conclusion
$$T(2,\alpha)\simeq P(2,\alpha),\;T(2,\beta)\simeq P(1,\beta)\;\mbox{and }\;T(1,\beta)\simeq L(1,\beta).$$
Hence $\End_{A}(T)$ is given by the path algebra of the quiver
$$\xymatrix{\bullet_{(2,\beta)'}\ar[rr]^{x}&&\bullet_{(1,\beta)'}\ar[rr]^y&&\bullet_{(2,\alpha)'}}.$$


\subsection{The Ringel dual of the Jacobson system}
Another $2$-system of ideals in~$R$ is given by
$$K=\scJ,\; I=0\;\mbox{and }\; L=\langle\varepsilon_\beta,f\rangle.$$
In this case, $\scA(R,\II)$ is the path algebra of the quiver
$$\xymatrix{\bullet_{(1,\alpha)}\ar[rr]^{x}&&\bullet_{(2,\alpha)}\ar[rr]^y&&\bullet_{(1,\beta)}}.$$
Hence, this algebra is indeed Ringel dual to the ADR algebra in Section~\ref{ADRex}, as predicted by Theorem~\ref{Main1}.

\subsection{The remaining cases}
We discuss $\scA(R,\II)$ for the remaining choices of semisimple $2$-systems $\II$ of ideals. These correspond to all triples of ideals~$K,L,I$, such that $K\supset \scJ\subset L$, $K\supset I\subset L$ and~$KL\subset I$. We ignore the 7 possibilities where $\scA(R,\II)$ is zero or semisimple. 

\subsubsection{} If we set~$K=\scJ=L$ and~$I=0$, then $\scA(R,\II)$ is isomorphic to the path algebra of the quiver
$$\xymatrix{ \bullet_{(1,\alpha)}\ar[r]^x& \bullet_{(2,\alpha)}\ar[r]^y&\bullet_{(1,\beta)}\ar[r]^z&\bullet_{(2,\beta)}&\qquad\mbox{with relation~$z\circ y=0$.}
}$$

\subsubsection{} If we set~$K=\langle \varepsilon_\alpha,f\rangle$, $L=\langle \varepsilon_\beta,f\rangle$ and~$I=0$, then $\scA(R,\II)$ is Morita equivalent to $R$.

\subsubsection{} If we set~$K=\langle \varepsilon_\alpha,f\rangle$ and~$L=\scJ=I$, then $\scA(R,\II)$ is Morita equivalent to the path algebra of the quiver
$$\xymatrix{\bullet_{(2,\alpha)}&& \bullet_{(1,\beta)}\ar[r]& \bullet_{(2,\beta)}
}$$

\subsubsection{} If we set~$K=\scJ=I$ and~$L=\langle \varepsilon_\beta,f\rangle$, then $\scA(R,\II)$ is Morita equivalent to the path algebra of the quiver
$$\xymatrix{\bullet_{(1,\beta)}&&\bullet_{(1,\alpha)}\ar[r]&\bullet_{(2,\alpha)}
}$$

\subsubsection{} If we set~$K=\scJ=L=I$, then $\scA(R,\II)$ is isomorphic to the path algebra of the quiver
$$\xymatrix{\bullet_{(1,\alpha)}\ar[r]&\bullet_{(2,\alpha)}&&\bullet_{(1,\beta)}\ar[r]&\bullet_{(2,\beta)}
}$$


\subsubsection{} If we set~$K=\langle \varepsilon_\alpha,f\rangle=L=I$, then $\scA(R,\II)$ is isomorphic to $R$.


\subsubsection{} If we set~$K=\langle \varepsilon_\beta,f\rangle=L=I$, then $\scA(R,\II)$ is isomorphic to $R$.



\subsection*{Acknowledgement}
The research was supported by ARC grant DE170100623.

\begin{flushleft}
	K. Coulembier\qquad \url{kevin.coulembier@sydney.edu.au}
	
	School of Mathematics and Statistics, University of Sydney, NSW 2006, Australia
	
	\end{flushleft}

\end{document}